\newcommand{\D}{\displaystyle}
\newcommand{\DF}[2]{\frac{\D#1}{\D#2}}
\newtheorem{lem}{Lemma}[section]
\newtheorem{theorem}{Theorem}[section]
\newtheorem{remark}{Remark}[section]
\numberwithin{equation}{section}
\newenvironment{proof}[1][Proof]{\textbf{#1.} }
{\ \rule{0.75em}{0.75em}\smallskip}
\DeclareMathOperator{\prox}{prox}
\date{}
\title{A primal-dual fixed-point algorithm for minimization of the sum of three convex separable functions}
\author{Peijun Chen$^{1,2,3}$,\  Jianguo Huang$^{1}$,\ Xiaoqun Zhang$^{1,4}$
\\
\\$^{1}${\small \textit{Department of Mathematics, Shanghai Jiao Tong University,}} \\{\small \textit{Shanghai 200240, China}}
\\
$^{2}${\small \textit{School of Biomedical Engineering, Shanghai Jiao Tong University,}}\\{\small \textit{Shanghai 200240, China}}
\\
$^{3}${\small \textit{ Department of Mathematics, Taiyuan University of Science and Technology,}}
\\{\small \textit{Taiyuan 030024, China}}
\\$^{4}$ {\small\textit{Institute of Natural Sciences, Shanghai Jiao Tong University, }}
\\
{\small\textit{Shanghai 200240, China}}
\\
{\small Email: chenpeijun@sjtu.edu.cn, jghuang@sjtu.edu.cn and xqzhang@sjtu.edu.cn}
}
\begin{document}\maketitle
\begin{abstract}
 Many problems arising in image processing and signal recovery with multi-regularization  can be formulated as minimization of a sum of three convex separable functions. Typically, the objective function involves a smooth function with Lipschitz continuous gradient, a linear composite nonsmooth function and a nonsmooth  function.  In this paper, we propose a primal-dual fixed-point (PDFP) scheme  to solve the above class of problems. The proposed algorithm for three block problems is a fully splitting symmetric scheme, only involving explicit gradient and linear operators without inner iteration, when the nonsmooth functions can be easily solved via their proximity
operators, such as $\ell_1$ type regularization. We study the convergence of the proposed algorithm and illustrate its efficiency through examples on fused LASSO  and image restoration with non-negative constraint and sparse regularization.
\end{abstract}

\noindent Keywords: primal-dual fixed-point algorithm, convex separable minimization, proximity operator, sparsity regularization.

\section{Introduction}

In this paper, we aim to design a  primal-dual fixed-point algorithmic framework for solving the following minimization problem:
\begin{align}
   \underset{x\in \mathbb{R}^n}{\mbox{min}}\quad {f_1}(x)+({f_2}\circ B)(x)+{f_3}(x), \label{PDFP2O3B:eqbasic}
\end{align}
where ${f_1},{f_2}$ and ${f_3}$ are three proper lower semi-continuous convex functions,  and ${f_1}$ is differentiable on  ${\mathbb{R}^n}$ with a $1/\beta$-Lipschitz continuous gradient for some $\beta\in(0,+\infty]$, while $B:{\mathbb{R}^n}\rightarrow \mathbb{R}^m$ is a bounded linear transformation. This formulation covers a wide application in image processing and signal recovery with multi-regularity terms.
For instance, in many imaging and data processing applications, the functional $f_1$  corresponds
to a data-fidelity term, and the last two terms are related to regularity terms.
As a direct example of \eqref{PDFP2O3B:eqbasic}, we can consider  the fused LASSO penalized
problems \cite{YL06} defined  by
\begin{align*}
   \underset{x\in {\mathbb{R}^n}}{\mbox{min}}\quad \frac 1 2\|Ax-a\|^2+ \mu_1 \|Bx\|_1+\mu_2 \|x\|_1.%\label{fused_LASSO02}
\end{align*}
On the other hand, in the imaging science, total variation regularization with $B$ being the discrete gradient  operator together with $\ell_1$ regularization has been adopted in some image restoration applications, for example in \cite{Goldstein2009}.

As far as we know, Condat \cite{C13} tackled a problem with the same form as given in  \eqref{PDFP2O3B:eqbasic} and proposed a primal dual splitting scheme. Extensions to multi-block composite functions are also discussed in detail.  For the special case $B=I$ ($I$ denotes the usual identity operator), Davis and Yin  \cite{DY15} proposed a three block operator splitting scheme based on monotone operators. When the problem \eqref{PDFP2O3B:eqbasic}  reduces to  two-block separable functions, many splitting and proximal algorithms have been proposed and studied in the literature. Among them, extensive research have been conducted on the  alternating direction of multiplier method (ADMM) \cite{Fortin1983} (also known as split Bregman \cite{Goldstein2009}, see for example \cite{Boyd2010} and the references therein).  The primal-dual hybrid gradient method (PDHG) \cite{Zhu2008,Esser2010,CP11,PC11}, also known as Chambolle-Pock algorithm \cite{CP11}, is another class of popular algorithm, largely adopted in imaging applications. In \cite{Zhang2010,CHZ13}, several completely decoupled schemes, such as inexact Uzawa solver and  primal-dual fixed-point algorithm, are proposed to avoid subproblem solving for some typical $\ell_1$ minimization problems. Komodakis and Pesquet \cite{KP14} recently gave a nice overview of recent primal-dual approaches for solving large-scale optimization problems \eqref{PDFP2O3B:eqbasic}.
A general class of multi-step fixed-point proximity algorithms is  proposed in \cite{LSXZ15}, which covers several existing algorithms \cite{CP11,PC11} as special cases.
In the preparation of this paper,  we notice that Li and Zhang \cite{LZ15}  also studied  the problem \eqref{PDFP2O3B:eqbasic} and introduced a quasi-Newton based scheme as preconditioned operators and the overrelaxation strategies to accelerate the algorithms. Both algorithms can be viewed as a generalization of  Condat's algorithm  \cite{C13}. The theoretical analysis is established  based on the multi-step techniques present in \cite{LSXZ15}.

In the following, we  mainly review some most relevant work for a concise presentation. We first consider a constrained regularization problem
 \begin{align}
   \underset{x\in C}{\mbox{ min}}\quad  {f_1}(x)+({f_2}\circ B)(x), \label{PDFP2OC:eqbasic}
\end{align}
where $C \subset \mathbb{R}^n$ is a closed convex set arising from physical requirements of the solutions.  This problem can be reformulated as the form \eqref{PDFP2O3B:eqbasic} by introducing a set indicator function $\chi_C$ (see \eqref{notation:chiC}) as $f_3$.  For example, this problem \eqref{PDFP2OC:eqbasic} has been studied in \cite{KLSX12} in the context of maximum a posterior ECT reconstruction, and  a preconditioned alternating projection algorithm (PAPA) is proposed for solving the resulted regularization problem. For $f_3=0$ in \eqref{PDFP2O3B:eqbasic}, we proposed a primal-dual fixed-point algorithm PDFP$^2$O (primal-dual fixed-point algorithm based on proximity operator) in \cite{CHZ13}. Based on the fixed point theory, we have shown the convergence of the scheme PDFP$^2$O and its convergence rate under suitable conditions.

In this work, we aim to extend the ideas of PDFP$^2$O in \cite{CHZ13} and PAPA in \cite{KLSX12} for solving \eqref{PDFP2O3B:eqbasic} without subproblem solving and provide a convergence analysis on the primal dual sequences.  The specific algorithm, namely  primal-dual fixed-point (PDFP) algorithm, is formulated as follows:
\begin{subequations}
 \label{formbasic3B}
 \begin{numcases}{(\mbox{PDFP}) \quad }% {(\mbox{PDFP}^2\mbox{O}_{3B}) \quad }
   y^{k+1}=\prox_{{\gamma}{{f_3}}}(x^k-\gamma\nabla {{f_1}}(x^k)-{\lambda} B^T  v^{k}),\label{formbasic3Ba}\\
   v^{k+1}=(I-\prox_{\frac{\gamma}{\lambda}{{f_2}}})(By^{k+1}+v^k)\label{formbasic3Bb},\\
   x^{k+1}=\prox_{{\gamma}{{f_3}}}(x^k-\gamma\nabla {{f_1}}(x^k)-{\lambda} B^T  v^{k+1}),\label{formbasic3Bc}
 \end{numcases}
\end{subequations}
where $0<\lambda< 1/\lambda_{\max}(BB^T)$, $0<\gamma<2\beta$. Here $\prox_f$  is the proximity operator \cite{M62} of a function $f$, see \eqref{notation:prox}.
When $f_3=\chi_C$,  the proposed algorithm \eqref{formbasic3B} is reduced to  PAPA proposed in \cite{KLSX12}, see \eqref{formbasicC2}. For the special case $f_3=0$, we obtain PDFP$^2$O proposed in \cite{CHZ13}.  The convergence analysis of the proposed PDFP algorithm is  built upon fixed point theory on the primal and dual pairs.
The overall scheme is completely explicit, which allows an easy implementation and parallel computing for many large scale applications. This will be further illustrated through application to the problems arising in statistics learning and image restoration.
The PDFP is a symmetric  form  and it is  different from Condat's algorithm proposed in \cite{C13}. In addition, we point out that  the  ranges of the parameters are larger  than those of \cite{C13,LZ15} and may lead to a significant advantage of parameter selection in practice. This will be further discussed in section \ref{seq:Connections}.

The rest of the paper is organized as follows.  In section \ref{seq:Derivation}, we will present some preliminaries and notations, and  deduce {PDFP} from the first order optimality condition. In section \ref{seq:Convergence}, we will provide the convergence results and the linear convergence rate results for some special cases. In section \ref{seq:Connections},  we will make a comparison on the form of  the PDFP algorithm \eqref{formbasic3B} with some existing algorithms. In section \ref{sec:Numerical_experiments}, we will show the numerical performance and the efficiency of {PDFP} through some examples on fused LASSO and pMRI (parallel magnetic resonance image) reconstruction.

\section{Primal dual fixed point algorithm}\label{seq:Derivation}
\subsection{Preliminaries and notations}\label{seq:Preparation}
For the self completeness of this work, we list  some relevant notations,  definitions, assumption and lemmas in convex analysis. One may refer to \cite{CW05,CHZ13} and the references therein for more details.

For the ease of presentation, we restrict our discussion in the Euclidean space $\mathbb{R}^n$, equipped with the usual inner product $\langle \cdot,\cdot \rangle$  and
norm $\|\cdot\|=\langle \cdot,\cdot \rangle^{1/2}$. We first assume that the problem \eqref{PDFP2O3B:eqbasic} has at least one solution and ${f_2},\ {f_3},\ B$ satisfy
\begin{align}
   0\in \mbox{int}(\mbox{dom}_{{f_2}}- B(\mbox{dom}_{{f_3}})),\label{asumption01}
\end{align}
where the symbol $\mbox{int}(\cdot)$ denotes the strong relative interior of a convex subset, and the effective domain of $f$ is defined as
$
   \mbox{dom}_f = \{x\in \mathbb{R}^n| f(x) < +\infty\}.
$

The $\ell_1$ norm of a vector $x\in \mathbb{R}^n$ is denoted by $\|\cdot\|_1$ and  the spectral norm of a matrix is denoted by $\|\cdot\|_2$. Let $\Gamma_0(\mathbb{R}^n)$ be the collection of all proper lower semi-continuous convex functions from $\mathbb{R}^n$ to $(-\infty,+\infty]$.
For a function $f\in\Gamma_0(\mathbb{R}^n)$,  the proximity operator of $f$: $\prox_f$ \cite{M62} is defined by
\begin{align}
   \prox_{f}(x)= \underset{y\in\mathbb{R}^n}{\mbox{arg min}}\ {f
    (y)+\DF{1}{2}\|x-y\|^2}. \label{notation:prox}
\end{align}
For a nonempty closed convex set $C\subset \mathbb{R}^n$, let $\chi_C$  be the indicator function of $C$, defined by
\begin{align}
   \chi_C(x)=
     \left\{
      \begin{array}{ll}
         0, &x\in C,\\
         +\infty,&x\not\in C.
      \end{array}
     \right.\label{notation:chiC}
\end{align}
Let $\mbox{proj}_C$ be the projection operator onto $C$, i.e.
\begin{align*}
   \mbox{proj}_{C}(x)= \underset{y\in C}{\mbox{arg min}}\  \|x-y\|^2. %\label{notation:proj}
\end{align*}
It is easy to see that $\prox_{\gamma \chi_C}=\mbox{proj}_C$ for all $\gamma>0$, and the proximity operator is a generalization of projection operator.  Note that many efficient splitting algorithms rely on the fact that $\prox_{f}$ has a closed form solution. For example, when $f=\gamma\|\cdot\|_1$, the proximity solution is given by element-wised soft-shrinking. We refer the reader to \cite{CW05} for more details about proximity operators.
Let  $\partial f$  be the subdifferential of $f$, i.e.
\begin{align}
      \partial f(x)=\{v\in\mathbb{R}^n\ |\ \langle y-x,v\rangle\leq f(y)-f(x)\ \mbox{ for all } y\in\mathbb{R}^n\}, \label{partialf}
\end{align}
and ${f^*}$ be the convex conjugate function  of ${f}$, defined by
\begin{align*}
      f^*(x)=\underset{y \in \mathbb{R}^n}{\sup}\langle x,y\rangle-f(y).
\end{align*}

An operator $T:\mathbb{R}^n\rightarrow\mathbb{R}^n$ is nonexpansive if
\begin{align*}
      \|Tx-Ty\|\leq\|x-y\|\ \mbox{ for all }  x,y \in\mathbb{R}^n,
\end{align*}
and $T$ is firmly nonexpansive if
\begin{align*}
   \ \|Tx-Ty\|^2\leq\langle Tx-Ty,x- {y}\rangle \ \mbox{ for all }  x,y \in\mathbb{R}^n.
\end{align*}
It is obvious that a firmly nonexpansive operator is nonexpansive.
An operator $T$ is $\delta$-strongly monotone if there exists a positive real number $\delta$ such that
\begin{align}
   \label{definition:stronglymonotone}
     \langle Tx-Ty,x- {y}\rangle\geq \delta\|x-y\|^2 \ \mbox{ for all }  x,y \in\mathbb{R}^n.
\end{align}

%%%%%%%%%%%%%%%%%%%%%%%%%%%%%%%%%
%%%%%%%%%%%%%%%%%%%%%%%%%%%%%%%%%
%            Lemma              %
%%%%%%%%%%%%%%%%%%%%%%%%%%%%%%%%%
%%%%%%%%%%%%%%%%%%%%%%%%%%%%%%%%%
\begin{lem}%[Lemma 2.2 of \cite{CW05}]
   \label{lem_additivity}
   For any two functions ${{f_2}} \in \Gamma_0 (\mathbb{R}^m)$ and ${{f_3}}\in\Gamma_0 (\mathbb{R}^n)$, and a bounded linear transformation $B:{\mathbb{R}^n}\rightarrow \mathbb{R}^m$, satisfying that
   $
      0\in \mbox{int}(\mbox{dom}_{{f_2}}- B(\mbox{dom}_{{f_3}})),
   $
   there holds
     \[
          \partial ({f_2}\circ B+{f_3})=B^T\circ\partial{{f_2}}\circ B+\partial {f_3}.
   \]
\end{lem}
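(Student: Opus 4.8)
The plan is to prove the two inclusions separately. The inclusion $B^T\circ\partial f_2\circ B+\partial f_3\subseteq\partial(f_2\circ B+f_3)$ needs no constraint qualification and is immediate from the definitions: for $x\in\mathbb R^n$, if $\xi\in\partial f_2(Bx)$ and $\zeta\in\partial f_3(x)$, then for every $y\in\mathbb R^n$ the subgradient inequalities give $\langle y-x,B^T\xi\rangle=\langle By-Bx,\xi\rangle\le f_2(By)-f_2(Bx)$ and $\langle y-x,\zeta\rangle\le f_3(y)-f_3(x)$; adding them shows $B^T\xi+\zeta\in\partial(f_2\circ B+f_3)(x)$. So the real content is the reverse inclusion, where \eqref{asumption01} enters.

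For that, I would reduce the claim to the single chain rule $\partial(g\circ A)=A^T\circ\partial g\circ A$ for a linear map $A$ and $g\in\Gamma_0$. Work in the product space $\mathbb R^m\times\mathbb R^n$, set $\tilde f(y,x):=f_2(y)+f_3(x)\in\Gamma_0(\mathbb R^m\times\mathbb R^n)$ and let $L:\mathbb R^n\to\mathbb R^m\times\mathbb R^n$, $Lx:=(Bx,x)$, which is injective linear with adjoint $L^T(\eta,\zeta)=B^T\eta+\zeta$. Then $f_2\circ B+f_3=\tilde f\circ L$. A two-line computation from the definition of the subdifferential gives $\partial\tilde f(y,x)=\partial f_2(y)\times\partial f_3(x)$ (freeze one variable to extract each componentwise inequality; conversely add the two). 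Applying the chain rule to $\tilde f\circ L$ then yields $\partial(f_2\circ B+f_3)(x)=L^T\big(\partial f_2(Bx)\times\partial f_3(x)\big)=B^T\partial f_2(Bx)+\partial f_3(x)$, which is exactly the stated identity.

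It remains to check that \eqref{asumption01} is the correct constraint qualification for the chain rule applied to $\tilde f\circ L$, namely $0\in\mbox{int}\big(\mbox{dom}_{\tilde f}-L(\mathbb R^n)\big)$. Here $\mbox{dom}_{\tilde f}=\mbox{dom}_{f_2}\times\mbox{dom}_{f_3}$, and a short computation identifies $\mbox{dom}_{\tilde f}-L(\mathbb R^n)=\{(c+Bb,\,b):\,c\in\mbox{dom}_{f_2}-B(\mbox{dom}_{f_3}),\ b\in\mathbb R^n\}$, i.e. the image of $\big(\mbox{dom}_{f_2}-B(\mbox{dom}_{f_3})\big)\times\mathbb R^n$ under the linear isomorphism $(c,b)\mapsto(c+Bb,b)$ of $\mathbb R^m\times\mathbb R^n$ (inverse $(a,b)\mapsto(a-Bb,b)$). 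Since a linear isomorphism commutes with $\mbox{int}(\cdot)$, the origin lies in $\mbox{int}\big(\mbox{dom}_{\tilde f}-L(\mathbb R^n)\big)$ precisely when $0\in\mbox{int}\big(\mbox{dom}_{f_2}-B(\mbox{dom}_{f_3})\big)$, which is \eqref{asumption01}. Hence the chain rule applies and the identity follows.

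The only genuine obstacle is the chain rule $\partial(g\circ A)=A^T\circ\partial g\circ A$ itself: its easy inclusion is the elementary argument used above, but the reverse inclusion $\partial(g\circ A)(z)\subseteq A^T\partial g(Az)$ is the classical result whose proof rests on a separation theorem (equivalently, on Fenchel--Rockafellar duality applied to $\min_z g(Az)-\langle p,z\rangle$) and holds exactly under the stated constraint qualification. If one prefers, this step can simply be quoted from standard references such as \cite{CW05}; alternatively, one may invoke directly the combined sum-plus-composition subdifferential rule in $\Gamma_0$ with $f_3$, $f_2$, $B$ in place of the two functions and the linear operator, in which case the domain computation above is precisely the verification that \eqref{asumption01} is the needed qualification.
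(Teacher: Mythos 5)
The paper does not actually prove Lemma \ref{lem_additivity}: it is listed among the preliminaries as a standard fact of convex analysis, with the reader referred to \cite{CW05} and the references therein. So there is no in-paper argument to match yours against; what can be assessed is whether your derivation is sound, and it is. The easy inclusion $B^T\circ\partial f_2\circ B+\partial f_3\subseteq\partial(f_2\circ B+f_3)$ is handled correctly from the definition \eqref{partialf}. Your reduction of the hard inclusion is clean: writing $f_2\circ B+f_3=\tilde f\circ L$ with $\tilde f(y,x)=f_2(y)+f_3(x)$ and $Lx=(Bx,x)$, using the separability $\partial\tilde f(y,x)=\partial f_2(y)\times\partial f_3(x)$ (with the harmless caveat that both sides are empty off $\mbox{dom}_{\tilde f}$), and computing $L^T(\eta,\zeta)=B^T\eta+\zeta$ are all correct. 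The constraint-qualification bookkeeping is also right: $\mbox{dom}_{\tilde f}-L(\mathbb{R}^n)$ is the image of $\bigl(\mbox{dom}_{f_2}-B(\mbox{dom}_{f_3})\bigr)\times\mathbb{R}^n$ under the isomorphism $(c,b)\mapsto(c+Bb,b)$, and since the (strong) relative interior commutes with linear isomorphisms and with products in finite dimensions, $0\in \mbox{int}\bigl(\mbox{dom}_{\tilde f}-L(\mathbb{R}^n)\bigr)$ is exactly \eqref{asumption01}. The one ingredient you do not prove, the chain rule $\partial(g\circ A)=A^T\circ\partial g\circ A$ under this qualification, is the genuinely nontrivial separation-theorem step, and quoting it from the standard literature is entirely in the spirit of how the paper itself treats this lemma; your argument in effect shows that the sum-plus-composition rule follows from the composition rule alone, which is a slightly more economical reduction than invoking a combined sum-and-chain rule directly.
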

%%%%%%%%%%%%%%%%%%%%%%%%%%%%%%%%%
\begin{lem}%[Lemma 2.4 of \cite{CW05}]
   \label{lem_proximity}
   Let ${f}\in \Gamma_0(\mathbb{R}^n)$. Then $\prox_f$ and $I-\prox_f$ are firmly nonexpansive.
   In addition, there hold
   \begin{align}
      &x=\prox_{{f}}(y)  \Leftrightarrow y-x\in \partial{{f}}(x) \mbox{ for a given  } y\in \mathbb{R}^n,\label{lem_Fema}\\ %
      &y\in \partial{{f}}(x) \Leftrightarrow  x=\prox_{{f}}(x+y)\nonumber\\
      &\phantom{y\in \partial{{f}}(x)}\Leftrightarrow y=(I-\prox_{{f}})(x+y)\mbox{ for } x,y\in \mathbb{R}^n, \label{lem_keylem}\\%\mbox{ for a given  } x \in \mathbb{R}^n
      &x=\prox_{\gamma {f}}(x)+ \gamma \prox_{\frac{1}{\gamma}{f^*}}({\frac{1}{\gamma} x})  \mbox{  for all }  x\in \mathbb{R}^n \mbox{ and }\gamma>0.
      %&x=\prox_{{\frac{\gamma}{\lambda}}{f}}(x)+{\frac{\gamma}{\lambda}}\prox_{\frac{\lambda }{\gamma}{f^*}}({\frac{\lambda}{\gamma} x})  \mbox{  for all }  x\in \mathbb{R}^n.
      %\mbox{ and any given }\lambda>0, \gamma>0 .
      \label{lem_Moreau_decomposition}
   \end{align}
   If $f$ has $1/\beta$-Lipschitz continuous gradient further, there holds
   \begin{align}\label{lem_LCG}
      \beta\|\nabla f(x)-\nabla f(y)\|^2\leq \langle \nabla f(x)-\nabla f(y), x-y\rangle \ \mbox{ for all } x, y \in\mathbb{R}^n.
   \end{align}
\end{lem}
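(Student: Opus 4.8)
My plan is to reduce everything except the last inequality to the single characterization \eqref{lem_Fema}, which I would establish first, and then to treat \eqref{lem_LCG} --- the Baillon--Haddad co-coercivity estimate --- as the one genuinely substantive point.

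First I would prove \eqref{lem_Fema}. Since $y'\mapsto\tfrac12\|y-y'\|^2$ is finite, smooth and convex on all of $\mathbb{R}^n$, the subdifferential sum rule applies to the objective defining $\prox_f(y)$, so Fermat's rule at the minimizer $x=\prox_f(y)$ reads $0\in\partial f(x)+(x-y)$, i.e. $y-x\in\partial f(x)$. Firm nonexpansiveness of $\prox_f$ then follows from monotonicity of $\partial f$ (obtained by adding two instances of \eqref{partialf}): with $p=\prox_f(x)$ and $q=\prox_f(y)$ one has $x-p\in\partial f(p)$ and $y-q\in\partial f(q)$, hence $\langle (x-p)-(y-q),p-q\rangle\ge 0$, which is $\langle x-y,p-q\rangle\ge\|p-q\|^2$. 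To get firm nonexpansiveness of $I-\prox_f$ I would only invoke the elementary identity $\langle b-a,b\rangle-\|b-a\|^2=\langle a,b\rangle-\|a\|^2$ with $a=p-q$ and $b=x-y$, so the two firm-nonexpansiveness statements are in fact equivalent.

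Next, the substitution identities \eqref{lem_keylem} follow from \eqref{lem_Fema} by replacing $y$ with $x+y$: $\prox_f(x+y)=x\iff y\in\partial f(x)$, and $\prox_f(x+y)=x$ is the same as $y=(x+y)-\prox_f(x+y)=(I-\prox_f)(x+y)$. For Moreau's decomposition \eqref{lem_Moreau_decomposition} I would put $p=\prox_{\gamma f}(x)$, so $\tfrac1\gamma(x-p)\in\partial f(p)$ by \eqref{lem_Fema}; the Fenchel--Young inversion $v\in\partial f(u)\iff u\in\partial f^*(v)$ then gives $p\in\partial f^*\bigl(\tfrac1\gamma(x-p)\bigr)$, and writing $q=\tfrac1\gamma(x-p)$ this reads $\tfrac x\gamma-q\in\tfrac1\gamma\partial f^*(q)$, i.e. $q=\prox_{\frac1\gamma f^*}\bigl(\tfrac x\gamma\bigr)$ by \eqref{lem_Fema} applied to $\tfrac1\gamma f^*$; hence $x=p+\gamma q$ is precisely the claimed identity.

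Finally \eqref{lem_LCG}. Here my plan is: first record the descent lemma --- integrating $t\mapsto f(v+t(u-v))$ and using the $1/\beta$-Lipschitz bound on $\nabla f$ gives $f(u)\le f(v)+\langle\nabla f(v),u-v\rangle+\tfrac1{2\beta}\|u-v\|^2$ for all $u,v$. Then, fixing $y$, apply this to the convex function $\phi(z)=f(z)-\langle\nabla f(y),z\rangle$, whose gradient is again $1/\beta$-Lipschitz and vanishes at $y$, so $y$ minimizes $\phi$; minimizing the quadratic majorant of $\phi$ based at an arbitrary $x$ over the free variable (the minimizer is $x-\beta\nabla\phi(x)$) yields $\phi(y)\le\phi(x)-\tfrac\beta2\|\nabla f(x)-\nabla f(y)\|^2$, i.e. $\tfrac\beta2\|\nabla f(x)-\nabla f(y)\|^2\le f(x)-f(y)-\langle\nabla f(y),x-y\rangle$. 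Adding this to the same inequality with $x$ and $y$ interchanged produces $\beta\|\nabla f(x)-\nabla f(y)\|^2\le\langle\nabla f(x)-\nabla f(y),x-y\rangle$. I expect this to be the main obstacle: the first three items are bookkeeping with the definition of the proximity operator and Fenchel duality, whereas here convexity must be brought in through the auxiliary function $\phi$ and the minimization of its quadratic majorant --- Lipschitz continuity of the gradient by itself only bounds $\langle\nabla f(x)-\nabla f(y),x-y\rangle$ from above. Alternatively, \eqref{lem_LCG} may simply be quoted as the Baillon--Haddad theorem.
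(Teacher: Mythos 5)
Your proof is correct in all parts. Note, however, that the paper does not actually prove this lemma: it is stated as a collection of standard facts with a pointer to the references (Combettes--Wajs and the authors' earlier PDFP$^2$O paper), so there is no in-paper argument to compare against. What you supply is the standard self-contained derivation: Fermat's rule plus the sum rule for \eqref{lem_Fema} (for the full equivalence one should also invoke that the objective $y'\mapsto f(y')+\tfrac12\|y-y'\|^2$ is strictly convex, so the first-order condition characterizes the unique minimizer --- a one-line addition); monotonicity of $\partial f$ for firm nonexpansiveness of $\prox_f$, with the polarization identity showing that firm nonexpansiveness of $T$ and of $I-T$ are equivalent; the substitution $y\mapsto x+y$ for \eqref{lem_keylem}; Fenchel--Young inversion for Moreau's decomposition; and the descent-lemma argument applied to $\phi(z)=f(z)-\langle\nabla f(y),z\rangle$ for the Baillon--Haddad estimate \eqref{lem_LCG}. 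You correctly identify the last item as the only step where convexity must genuinely be used beyond bookkeeping --- Lipschitz continuity of $\nabla f$ alone gives only an upper bound on $\langle\nabla f(x)-\nabla f(y),x-y\rangle$, and the inequality \eqref{lem_LCG} is exactly what the paper later needs in \eqref{eqbasicinequality02}. Quoting it as the Baillon--Haddad theorem, as you suggest in the alternative, is what the paper implicitly does.
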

%%%%%%%%%%%%%%%%%%%%%%%%%%%%%%%%%
\begin{lem}\label{lem:convergence}
Let $T$ be an operator, and $u^*$ be a fixed-point of $T$. Let $\{u^{k}\}$ be the sequence generated by the fixed point iteration $u^{k+1}=T(u^{k})$. Suppose (i) $T$ is continuous,  (ii) $\{\|u^k-u^*\|\}$  is non-increasing, (iii) $\lim_{k\to +\infty}  \|u^{k+1}-u^k\|=0$. Then the sequence $\{u^{k}\}$ is bounded and converges  to a fixed-point of $T$.
\end{lem}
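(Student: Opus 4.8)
The plan is to run the classical Opial-type argument for Fej\'er-monotone sequences, which simplifies in $\mathbb{R}^n$ because bounded sequences have convergent subsequences. First I would extract boundedness from (ii): since $\{\|u^k-u^*\|\}$ is non-increasing it is bounded above by $\|u^0-u^*\|$, so $\|u^k\|\le\|u^*\|+\|u^0-u^*\|$ and the whole sequence stays in a fixed ball. Being non-increasing and bounded below by $0$, the sequence $\{\|u^k-u^*\|\}$ also converges to some $\ell\ge 0$; this fact will be reused below.

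Next I would identify a cluster point and show it is a fixed point. By the Bolzano--Weierstrass theorem there is a subsequence $u^{k_j}\to\hat u$ for some $\hat u\in\mathbb{R}^n$. Applying continuity (i), $Tu^{k_j}\to T\hat u$; but $Tu^{k_j}=u^{k_j+1}$ and, by (iii), $\|u^{k_j+1}-u^{k_j}\|\to 0$, so $u^{k_j+1}\to\hat u$ as well. Comparing the two limits of the same sequence $\{u^{k_j+1}\}$ gives $T\hat u=\hat u$, i.e. $\hat u$ is a fixed point of $T$.

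Finally I would promote this to convergence of the entire sequence. Invoking the monotonicity hypothesis (ii) at the fixed point $\hat u$, the sequence $\{\|u^k-\hat u\|\}$ is non-increasing, hence convergent; since the subsequence $\|u^{k_j}-\hat u\|$ tends to $0$, the full limit must be $0$, so $u^k\to\hat u$. Combined with the boundedness obtained in the first step, this is exactly the assertion of the lemma.

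I expect the delicate point to be this last step, namely that the Fej\'er-type inequality in (ii) has to be available at the cluster point $\hat u$ rather than only at the a priori fixed point $u^*$ — equivalently, one must rule out that two subsequences converge to two distinct fixed points. (With (ii) attached to a single $u^*$, one only obtains $\|\hat u-u^*\|=\ell$ for every cluster point, which does not pin down $\hat u$.) In the settings where this lemma is used, the construction of $T$ guarantees that $\|u^k-z\|$ is non-increasing for \emph{every} fixed point $z$ of $T$, so I would read and apply (ii) in that stronger sense; the remaining estimates are routine.
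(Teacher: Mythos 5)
Your argument is the standard one, and it is essentially what the paper appeals to when it defers the proof to Theorem 3.5 of \cite{CHZ13}: boundedness and convergence of $\{\|u^k-u^*\|\}$ from (ii), a Bolzano--Weierstrass cluster point $\hat u$ which is fixed by $T$ via (i) and (iii), and then Fej\'er monotonicity at $\hat u$ to upgrade subsequential convergence to convergence of the whole sequence. The caveat you raise in your last paragraph is genuine and worth stating explicitly: with (ii) read literally, i.e.\ attached to the single point $u^*$, the lemma is false. A concrete counterexample: in $\mathbb{R}^2$ let $T$ be the identity on the closed unit disk and, in polar coordinates for $r\ge 1$, $T(r,\theta)=\bigl(r-(r-1)^2,\ \theta+(r-1)\bigr)$; starting from $r_0=3/2$ one gets $r_k-1\sim 1/k$, so $\|u^k\|$ decreases to $1$, $\|u^{k+1}-u^k\|\to 0$, $T$ is continuous with fixed point $u^*=0$, yet the total rotation $\sum_k (r_k-1)$ diverges and the orbit winds around the unit circle forever without converging. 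What rescues the lemma in this paper is exactly the stronger reading you adopt: Lemma 3.2 (inequality \eqref{keyineq01}) is proved for an \emph{arbitrary} fixed point $u^*$ of $T$, so $\{\|u^k-z\|_\lambda\}$ is non-increasing for \emph{every} fixed point $z$, and in particular for the cluster point $\hat u$ once it is shown to be fixed. With (ii) understood in that sense your proof is complete and correct; it would be cleaner to restate hypothesis (ii) of the lemma accordingly.
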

The proof of Lemma \ref{lem:convergence} is standard, and one may refer to  the proof of Theorem 3.5 in \cite{CHZ13} for more details.
%\begin{proof}
%The sequence  $\{\|u^k-u^*\|\}$ is non-increasing, so the sequence $\{u^k\}$ is bounded and there exists a convergent subsequence $\{ u^{k_j}\}$  such that
%\begin{equation}
%   \lim_{j\to +\infty}   \|u^{k_j}-\overline{u}^*\|=0 \label{eqTconvergence08}
%\end{equation}
%for some $\overline{u}^*$. Next,
%let us show that $\overline{u}^*$ is a fixed point of $T$. In fact,
%\[
%   \|T(u^{k_j})-\overline{u}^*\|=\|(u^{k_j+1}-u^{k_j})-(u^{k_j}-\overline{u}^*)\|\leq \|u^{k_j+1}-u^{k_j}\|+\|u^{k_j}-\overline{u}^*\|,
%\]
%which, in conjunction with  the assumption (iii) and \eqref{eqTconvergence08}, leads to
%\begin{equation}
%   \lim_{j\to +\infty}   \|T(u^{k_j})-\overline{u}^*\|=0.\label{eqTconvergence09}
%\end{equation}
%The operator $T$ is continuous under the assumption (i), so it follows from \eqref{eqTconvergence08} and \eqref{eqTconvergence09} that $\overline{u}^*$ is a fixed point of $T$. Moreover, we have known that $\{\|u^k-u^*\|\}$ is non-increasing for any fixed point $u^*$ of $T$. In particular, by choosing $u^*=\overline{u}^*$, we see that  $\{\|u^k-{\overline{u}}^*\|\}$ is non-increasing. Combining this and \eqref{eqTconvergence08} yields
%$
%   \lim_{k\to +\infty}     u^k=\overline{u}^*.
%$
%\end{proof}

Let $\gamma$ and $\lambda$ be two positive numbers. To simplify the presentation, we use in what follows the following notations:
\begin{align}
      &T_0(v,x)=\prox_{{\gamma}{{f_3}}}(x-\gamma\nabla {{f_1}}(x)-{\lambda} B^T v), \label{notation_Ty}\\
      &T_1(v,x)=(I-\prox_{\frac{\gamma}{\lambda}{{f_2}}})(B\circ T_0(v,x)+v),\label{notation_TvC}\\
      &T_2(v,x)=\prox_{{\gamma}{{f_3}}}(x-\gamma\nabla {{f_1}}(x)-{\lambda} B^T \circ T_1(v,x)),\label{notation_Tx}\\
      &T(v,x)=
       \left(
          {T_1}(v,x),\
          {T_2(v,x)}
       \right).\label{notation_T}
\end{align}
Denote
\begin{align}
   &g(x)=x-\gamma \nabla f_1(x)\  \mbox{ for all } x\in \mathbb{R}^n, \label{notation_g}\\
   &M=I-\lambda BB^T.\label{notation_M}
\end{align}
When $0<\lambda< 1/\lambda_{\max}(BB^T)$, $M$ is a positive symmetric definite matrix, so we can define a norm
\begin{align}
   \|v\|_{M}=\sqrt{\langle{v},Mv\rangle}\ \mbox{ for all } v\in \mathbb{R}^m. \label{notation_M_norm}
\end{align}
For a pair ${u}=\left(v,x\right)\in \mathbb{R}^m\times\mathbb{R}^n$, we also define
a norm on the product space $\mathbb{R}^m\times\mathbb{R}^n$ as
\begin{align}
   \|u\|_{\lambda}=\sqrt{\lambda\|v\|^2+\|x\|^2}. \label{notation_lambda_norm}
\end{align}
%Thus $\|\cdot\|_\lambda$ is a norm on the product space $\mathbb{R}^m\times\mathbb{R}^n$ whenever $\lambda>0$.

\subsection{Derivation of {PDFP}}%\label{seq:Derivation}

On extending the ideas of  PAPA proposed in \cite{KLSX12} and PDFP$^2$O proposed in \cite{CHZ13}, we derive the  following  primal-dual fixed-point algorithm \eqref{formbasic3B} for solving the minimization problem \eqref{PDFP2O3B:eqbasic}.

Under the assumption \eqref{asumption01}, by using the first order optimality condition of \eqref{PDFP2O3B:eqbasic} and lemma \ref{lem_additivity}, we have
\begin{align*}
    0\in \gamma\nabla {{f_1}}(x^*)+{\lambda}B^T\partial(\DF{\gamma}{\lambda} {{f_2}})(Bx^*)+\gamma\partial {f_3}(x^*),%\label{eqkey1}
\end{align*}
where $x^*$ is an optimal solution.  Let
\begin{align}
   v^*\in \partial(\DF{\gamma}{\lambda} {{f_2}})(Bx^*).
	\label{eq:v}
\end{align}
By applying %lemma \ref{lem_proximity},  we have
\eqref{lem_keylem}, we have
\begin{align}
   &v^*=
   (I-\prox_{\frac{\gamma}{\lambda} {{f_2}}})(Bx^*+v^*), \label{eqkey3}\\
   &x^*=\prox_{ {\gamma}  {{f_3}}}(x^*-\gamma\nabla {{f_1}}(x^*)-\lambda B^Tv^*). \label{eqkey2}
\end{align}
By inserting \eqref{eqkey2} into \eqref{eqkey3}, we get
\[
   v^*=(I-
   \prox_{\frac{\gamma}{\lambda}{{f_2}}})(B\circ \prox_{{\gamma}  {{f_3}}}(x^*-\gamma\nabla {{f_1}}(x^*)-\lambda B^Tv^*)+v^*),
\]
or equivalently, $v^*=T_1(v^*,x^*)$. Next, replacing $v^*$ in \eqref{eqkey2} by $T_1(v^*,x^*)$, we can get $x^*=T_2(v^*,x^*)$.
In other words $u^*=T(u^*)$  for ${u^*}=\left(v^*,x^*\right)$.
Meanwhile, if $u^*=T(u^*)$, we can get that $x^*$ meets the first order optimality condition of \eqref{PDFP2O3B:eqbasic} and thus $x^*$  is a minimizer of \eqref{PDFP2O3B:eqbasic}.

To sum up, we have the following theorem.

%%%%%%%%%%%%%%%%%%%%%%%%%%%%%%%%%%%%%%%%%%%%%%
%          Theorem fixed-point               %
%%%%%%%%%%%%%%%%%%%%%%%%%%%%%%%%%%%%%%%%%%%%%%
\begin{theorem}\label{theorem_fixedpoint}
  Suppose that $x^*$ is a solution of \eqref{PDFP2O3B:eqbasic} and $v^*\in \mathbb{R}^m$ is defined as \eqref{eq:v}. Then we have
   \begin{subequations}
     \nonumber
     \begin{numcases}{}
       v^*=T_1(v^*,x^*),\nonumber\\
       x^*=T_2(v^*,x^*),\nonumber
     \end{numcases}
   \end{subequations}
   i.e. ${u^*}=\left({v^*},{x^*}\right)$ is a fixed-point of $T$. Conversely, if ${u^*}= \left({v^*},{x^*}\right)\in \mathbb{R}^m\times\mathbb{R}^n$ is a fixed-point of $T$, then $x^*$  is a solution of \eqref{PDFP2O3B:eqbasic}.
\end{theorem}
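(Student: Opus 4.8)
The plan is to prove the two implications separately, in each case translating between the first-order optimality condition for \eqref{PDFP2O3B:eqbasic} and the fixed-point relations $v^*=T_1(v^*,x^*)$, $x^*=T_2(v^*,x^*)$ by means of the characterizations of the proximity operator collected in Lemma \ref{lem_proximity}. Most of the computation has in fact already been carried out in the derivation preceding the statement; the proof only has to assemble it into the two directions.

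For the forward implication I would start from the fact that $x^*$ minimizes \eqref{PDFP2O3B:eqbasic}, so $0\in\partial(f_1+f_2\circ B+f_3)(x^*)$. Invoking Lemma \ref{lem_additivity}, which is legitimate precisely because assumption \eqref{asumption01} is in force, this splits as $0\in\nabla f_1(x^*)+B^T\partial f_2(Bx^*)+\partial f_3(x^*)$. Multiplying by $\gamma>0$ and writing $\gamma\,\partial f_2=\lambda\,\partial(\frac{\gamma}{\lambda}f_2)$ gives $0\in\gamma\nabla f_1(x^*)+\lambda B^T\partial(\frac{\gamma}{\lambda}f_2)(Bx^*)+\gamma\partial f_3(x^*)$. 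Choosing $v^*$ as in \eqref{eq:v}, identity \eqref{lem_keylem} turns the membership $v^*\in\partial(\frac{\gamma}{\lambda}f_2)(Bx^*)$ into \eqref{eqkey3}, while the residual inclusion $-\gamma\nabla f_1(x^*)-\lambda B^Tv^*\in\gamma\partial f_3(x^*)$ becomes, via \eqref{lem_Fema}, the identity \eqref{eqkey2}. Substituting \eqref{eqkey2} into \eqref{eqkey3} replaces $Bx^*$ by $B\circ T_0(v^*,x^*)$ and yields $v^*=T_1(v^*,x^*)$; feeding this back into \eqref{eqkey2} gives $x^*=T_2(v^*,x^*)$, that is, $u^*=T(u^*)$.

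For the converse I would run the same chain backwards. Starting from $v^*=T_1(v^*,x^*)$ and $x^*=T_2(v^*,x^*)$, note that $T_2(v^*,x^*)=\prox_{\gamma f_3}(x^*-\gamma\nabla f_1(x^*)-\lambda B^TT_1(v^*,x^*))=T_0(v^*,x^*)$ because $v^*=T_1(v^*,x^*)$; hence \eqref{eqkey2} holds and $B\circ T_0(v^*,x^*)=Bx^*$, and substituting the latter into $v^*=T_1(v^*,x^*)$ gives \eqref{eqkey3}. Applying \eqref{lem_keylem} to \eqref{eqkey3} recovers $v^*\in\partial(\frac{\gamma}{\lambda}f_2)(Bx^*)$, and applying \eqref{lem_Fema} to \eqref{eqkey2} recovers $-\gamma\nabla f_1(x^*)-\lambda B^Tv^*\in\gamma\partial f_3(x^*)$. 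Adding these, dividing by $\gamma$, and using Lemma \ref{lem_additivity} once more, now to recombine the subdifferentials, yields $0\in\partial(f_1+f_2\circ B+f_3)(x^*)$, so $x^*$ solves \eqref{PDFP2O3B:eqbasic}.

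The argument is essentially bookkeeping, and the only substantive input beyond the definitions is Lemma \ref{lem_additivity}, whose hypothesis is exactly assumption \eqref{asumption01}; so the main point requiring care is to make sure that hypothesis is invoked correctly in both directions. The other mild subtlety, and the place where a slip is most likely, is keeping the scaling constants straight when passing between $\partial f_2$, $\partial(\frac{\gamma}{\lambda}f_2)$ and $\prox_{\frac{\gamma}{\lambda}f_2}$, and correctly unwinding the nested compositions $T_0$, $T_1$, $T_2$ in the converse direction; neither of these is a genuine obstacle.
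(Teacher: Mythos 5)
Your proposal is correct and follows essentially the same route as the paper: first-order optimality plus Lemma \ref{lem_additivity}, the choice \eqref{eq:v}, the prox characterizations \eqref{lem_Fema}--\eqref{lem_keylem} to obtain \eqref{eqkey3}--\eqref{eqkey2}, and substitution to reach the fixed-point relations. The only difference is that you spell out the converse direction (in particular the observation that $v^*=T_1(v^*,x^*)$ forces $T_2(v^*,x^*)=T_0(v^*,x^*)$, which lets the chain be reversed), whereas the paper merely asserts it; your version is a faithful and slightly more complete rendering of the same argument.
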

%%%%%%%%%%%%%%%%%%%%%%%%%%%%%%%%%%%%%%%%%%%%%%
%         end Theorem fixed-point            %
%%%%%%%%%%%%%%%%%%%%%%%%%%%%%%%%%%%%%%%%%%%%%%

It is easy to confirm that the sequence $\{(v^{k+1}, x^{k+1})\}$ generated by {PDFP} algorithm \eqref{formbasic3B} is the Picard iteration
$(v^{k+1},x^{k+1})={T}(v^k,x^k)$. So we will use the operator $T$ to analyze the convergence of {PDFP} in section \ref{seq:Convergence}.

%%%%%%%%%%%%%%%%%%%%%%%%%%%%%%%%%%%%%%%%%%%%%%
%          end Theorem T form               %
%%%%%%%%%%%%%%%%%%%%%%%%%%%%%%%%%%%%%%%%%%%%%%

\section{Convergence analysis}\label{seq:Convergence}
In the following, we denote ${u^*}=\left({v^*},{x^*}\right)$ as a fixed-point of the operator $T$.
Let $\{u^{k+1}=(v^{k+1},x^{k+1})\}$ be the sequence generated by the operator $T$.
%Let $\{u^{k+1}=(v^{k+1},x^{k+1})\}$ be the sequence generated by the PDFP algorithm \eqref{formbasic3B}, i.e. by the fixed point iteration {related to the operator $T$.}
\subsection{Convergence}%\label{seq:Convergence}
\begin{lem} { There hold the following estimates:}
%The sequence $(v^{k+1},x^{k+1})$ generated by the PDFP algorithm satisfies the following inequalities:
 \begin{align}
   \|v^{k+1}-v^*\|^2&\leq \|v^{k}-v^*\|^2-\|v^{k+1}-v^k\|^2+2 \langle B^T(v^{k+1}-v^*),y^{k+1}-x^*\rangle,\label{eq:verror}\\
   \|x^{k+1}-x^*\|^2&\leq\|x^{k}-x^*\|^2-\|x^{k+1}-y^{k+1}\|^2-\|x^k-y^{k+1}\|^2\nonumber\\
	&+2\langle  x^{k+1}-y^{k+1}, \gamma\nabla {f_1}(x^k)+\lambda B^Tv^k \rangle\nonumber \\
   &-2\langle  x^{k+1}-x^*, \gamma\nabla {f_1}(x^k)+\lambda B^Tv^{k+1} \rangle+2\gamma ({f_3}(x^*)-{f_3}(y^{k+1})).\label{eq:xerror}
\end{align}
\end{lem}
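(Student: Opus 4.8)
The plan is to prove the two estimates separately, since \eqref{eq:verror} involves only the dual update \eqref{formbasic3Bb} while \eqref{eq:xerror} involves only the two primal prox-steps \eqref{formbasic3Ba} and \eqref{formbasic3Bc}.

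For \eqref{eq:verror} I would start from the two identities $v^{k+1}=(I-\prox_{\frac{\gamma}{\lambda}f_2})(By^{k+1}+v^k)$, which is exactly \eqref{formbasic3Bb}, and $v^*=(I-\prox_{\frac{\gamma}{\lambda}f_2})(Bx^*+v^*)$, which follows from Theorem \ref{theorem_fixedpoint} (equivalently \eqref{eqkey3} together with \eqref{eqkey2}). Since $I-\prox_{\frac{\gamma}{\lambda}f_2}$ is firmly nonexpansive by Lemma \ref{lem_proximity}, applying the firm nonexpansiveness inequality to the arguments $By^{k+1}+v^k$ and $Bx^*+v^*$ gives $\|v^{k+1}-v^*\|^2\le\langle v^{k+1}-v^*,\,B(y^{k+1}-x^*)+(v^k-v^*)\rangle$. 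I would then split the inner product, rewrite $\langle B(y^{k+1}-x^*),v^{k+1}-v^*\rangle$ as $\langle B^T(v^{k+1}-v^*),y^{k+1}-x^*\rangle$, and expand $\langle v^{k+1}-v^*,v^k-v^*\rangle$ by the polarization identity $\langle a,b\rangle=\tfrac12(\|a\|^2+\|b\|^2-\|a-b\|^2)$ with $a=v^{k+1}-v^*$ and $b=v^k-v^*$. Moving the resulting $\tfrac12\|v^{k+1}-v^*\|^2$ to the left-hand side and multiplying by $2$ produces \eqref{eq:verror}; this part is routine.

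For \eqref{eq:xerror}, write the two prox-steps as $y^{k+1}=\prox_{\gamma f_3}(q)$ and $x^{k+1}=\prox_{\gamma f_3}(p)$ with $q=x^k-\gamma\nabla f_1(x^k)-\lambda B^Tv^k$ and $p=x^k-\gamma\nabla f_1(x^k)-\lambda B^Tv^{k+1}$. By \eqref{lem_Fema} we have $p-x^{k+1}\in\gamma\partial f_3(x^{k+1})$ and $q-y^{k+1}\in\gamma\partial f_3(y^{k+1})$. I would insert the first inclusion into the subdifferential inequality \eqref{partialf} with test point $x^*$, and the second with test point $x^{k+1}$, and then convert every term of the form $\langle z-w,\,u-w\rangle$ into squared norms via $\langle z-w,u-w\rangle=\tfrac12(\|z-w\|^2+\|u-w\|^2-\|z-u\|^2)$. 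This produces two inequalities of three-point type: one bounding $\tfrac12\|x^{k+1}-x^*\|^2$ above by $\gamma(f_3(x^*)-f_3(x^{k+1}))+\tfrac12\|x^*-p\|^2-\tfrac12\|x^{k+1}-p\|^2$, and one bounding $-\gamma f_3(x^{k+1})$ above by $-\gamma f_3(y^{k+1})-\tfrac12\|y^{k+1}-q\|^2-\tfrac12\|x^{k+1}-y^{k+1}\|^2+\tfrac12\|x^{k+1}-q\|^2$. Adding them makes the $\gamma f_3(x^{k+1})$ terms cancel, so after multiplying by $2$ only $2\gamma(f_3(x^*)-f_3(y^{k+1}))$ survives from the function values.

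The remaining step is a bookkeeping computation: expand $\|x^{k+1}-p\|^2$, $\|x^*-p\|^2$, $\|x^{k+1}-q\|^2$ and $\|y^{k+1}-q\|^2$ around the common point $x^k$, writing $p=x^k-b$ and $q=x^k-a$ with $a=\gamma\nabla f_1(x^k)+\lambda B^Tv^k$ and $b=\gamma\nabla f_1(x^k)+\lambda B^Tv^{k+1}$. The $\|a\|^2$ and $\|b\|^2$ contributions cancel in pairs, the terms $\|x^k-x^*\|^2$ and $-\|x^k-y^{k+1}\|^2$ appear directly, and the surviving inner products regroup — after collecting the increments $x^{k+1}-x^k$, $y^{k+1}-x^k$ and $x^*-x^k$ against $a$ and $b$ — into exactly $2\langle x^{k+1}-y^{k+1},a\rangle-2\langle x^{k+1}-x^*,b\rangle$, which is the mixed term in \eqref{eq:xerror}. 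I expect this final expansion to be the only real obstacle, since it requires careful sign tracking and discarding nothing that depends on $x^k$ or $\nabla f_1(x^k)$; conceptually the whole lemma rests only on firm nonexpansiveness of $I-\prox$, the optimality characterization \eqref{lem_Fema}, and elementary Hilbert-space identities. Note in particular that the Lipschitz continuity of $\nabla f_1$ is not used in this lemma; it enters only later when \eqref{eq:verror} and \eqref{eq:xerror} are combined in the convergence analysis.
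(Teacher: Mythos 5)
Your proposal is correct and follows essentially the same route as the paper: firm nonexpansiveness of $I-\prox_{\frac{\gamma}{\lambda}f_2}$ applied to $By^{k+1}+v^k$ versus $Bx^*+v^*$ for \eqref{eq:verror}, and the two subdifferential inclusions from \eqref{formbasic3Bc} and \eqref{formbasic3Ba} tested against $x^*$ and $x^{k+1}$ respectively for \eqref{eq:xerror}, with the $f_3(x^{k+1})$ terms cancelling. The only difference is cosmetic bookkeeping — you expand via three-point identities around $x^k$, while the paper uses the identity $\|c-e\|^2=\|d-e\|^2-\|c-d\|^2+2\langle c-e,c-d\rangle$ directly — and your observation that the Lipschitz continuity of $\nabla f_1$ is not needed here is also consistent with the paper, which invokes it only in the subsequent lemma.
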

\begin{proof}
We first prove \eqref{eq:verror}.
By Lemma \ref{lem_proximity}, we know $I-\prox_{\frac{\gamma}{\lambda}{{f_2}}}$ is firmly nonexpansive,  and use \eqref{formbasic3Bb} and \eqref{eqkey3} we further have
\begin{align*}
    \|v^{k+1}-v^*\|^2\leq \langle v^{k+1}-v^*,(B y^{k+1}+v^k)-(Bx^*+v^*)\rangle,
\end{align*}
which implies
\begin{align*}
    \langle v^{k+1}-v^*,v^{k+1}-v^k\rangle &\leq \langle v^{k+1}-v^*,B (y^{k+1}-x^*)\rangle=\langle B^T (v^{k+1}-v^*),y^{k+1}-x^*\rangle.%\label{eq:v*vk1vk1v*}
\end{align*}
Thus
\begin{align*}
&\|v^{k+1}-v^*\|^2=\|v^{k}-v^*\|^2-\|v^{k+1}-v^k\|^2+2\langle v^{k+1}-v^*, v^{k+1}-v^k\rangle\\
\leq &\|v^{k}-v^*\|^2-\|v^{k+1}-v^k\|^2+2 \langle B^T(v^{k+1}-v^*),y^{k+1}-x^*\rangle.
\end{align*}

Next we prove \eqref{eq:xerror}. By the optimality condition of \eqref{formbasic3Bc} (cf. \eqref{lem_Fema}), we have
\begin{align*}
   (x^k-\gamma\nabla {f_1}(x^k)-\lambda B^Tv^{k+1})-x^{k+1} \in \gamma\partial {f_3}(x^{k+1}).
\end{align*}
By the property of subdifferentials (cf. \eqref{partialf}),
\begin{align*}
   \langle x^*-x^{k+1},(x^k-\gamma\nabla {f_1}(x^k)-\lambda B^Tv^{k+1})-x^{k+1} \rangle\leq \gamma ({f_3}(x^{*})- {f_3}(x^{k+1})),
\end{align*}
i.e.,
\begin{align*}
   &\langle x^{k+1}-x^*, x^{k+1}-x^k\rangle \leq -\langle  x^{k+1}-x^*, \gamma\nabla {f_1}(x^k)+\lambda B^Tv^{k+1} \rangle+\gamma ({f_3}(x^{*})- {f_3}(x^{k+1})).
\end{align*}
Therefore,
\begin{align}
   &\|x^{k+1}-x^*\|^2=\|x^{k}-x^*\|^2-\|x^{k+1}-x^k\|^2+2\langle x^{k+1}-x^*, x^{k+1}-x^k\rangle\nonumber\\
   \leq&\|x^{k}-x^*\|^2-\|x^{k+1}-x^k\|^2-2\langle  x^{k+1}-x^*, \gamma\nabla {f_1}(x^k)+\lambda B^Tv^{k+1} \rangle\nonumber\\
   &+2\gamma ({f_3}(x^{*})- {f_3}(x^{k+1})).\label{eq:xerrortemp}
\end{align}
On the other hand, by the optimality condition of \eqref{formbasic3Ba}, it follows that
\begin{align*}
   (x^k-\gamma\nabla {f_1}(x^k)-\lambda B^Tv^k)-y^{k+1} \in \gamma\partial {f_3}(y^{k+1}).
\end{align*}
Thanks to the property of subdifferentials, there holds
\begin{align*}
   \langle x^{k+1}-y^{k+1},(x^k-\gamma\nabla {f_1}(x^k)-\lambda B^Tv^k)-y^{k+1} \rangle\leq \gamma ({f_3}(x^{k+1})-{f_3}(y^{k+1})).
\end{align*}
So
\begin{align*}
   &\langle x^{k+1}-y^{k+1}, x^k-y^{k+1}\rangle\leq \langle  x^{k+1}-y^{k+1}, \gamma\nabla {f_1}(x^k)+\lambda B^Tv^k \rangle+\gamma ({f_3}(x^{k+1})-{f_3}(y^{k+1})).
\end{align*}
Thus
\begin{align*}
   &-\|x^{k+1}-x^k\|^2=-\|x^{k+1}-y^{k+1}\|^2-\|x^k-y^{k+1}\|^2+2\langle x^{k+1}-y^{k+1}, x^k-y^{k+1}\rangle\\
   \leq&-\|x^{k+1}-y^{k+1}\|^2-\|x^k-y^{k+1}\|^2+2\langle  x^{k+1}-y^{k+1}, \gamma\nabla {f_1}(x^k)+\lambda B^Tv^k \rangle\\
	&+2\gamma ({f_3}(x^{k+1})-{f_3}(y^{k+1})).
\end{align*}
Replacing the term $-\|x^{k+1}-x^k\|^2$ in \eqref{eq:xerrortemp} with the
right side term of the above inequality, we immediately obtain \eqref{eq:xerror}.
\end{proof}

\begin{lem}
\label{lem:sequencekeyineq} There holds
%The sequence $u^{k+1}=(v^{k+1},x^{k+1})$ generated by PDFP algorithm satisfies the following inequality
\begin{align}
\|u^{k+1}&-u^*\|_\lambda^2
\leq
\|u^k-u^*\|_\lambda^2-\lambda\|v^{k+1}-v^{k}\|^2_M-\|x^{k+1}-y^{k+1}+\lambda B^T(v^{k+1}-v^k)\|^2 \nonumber\\
 &-\|(x^k-y^{k+1})-(\gamma\nabla {f_1}(x^k)-\gamma\nabla {f_1}(x^*))\|^2-\gamma (2\beta- {\gamma} )\|\nabla {f_1}(x^k)-\nabla {f_1}(x^*)\|^2.
\label{keyineq01}
\end{align}
\end{lem}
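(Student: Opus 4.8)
The plan is to combine the two estimates \eqref{eq:verror} and \eqref{eq:xerror} from the previous lemma in the natural weighted way: form $\lambda\cdot\eqref{eq:verror} + \eqref{eq:xerror}$, so that the left side becomes $\lambda\|v^{k+1}-v^*\|^2 + \|x^{k+1}-x^*\|^2 = \|u^{k+1}-u^*\|_\lambda^2$ and the first two terms on the right sides combine to $\|u^k-u^*\|_\lambda^2$. Everything then hinges on showing that the remaining cross terms and negative square terms can be reorganized into the four non-positive expressions on the right of \eqref{keyineq01}.

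First I would collect all the inner-product terms. From $\lambda\cdot\eqref{eq:verror}$ we get $2\lambda\langle B^T(v^{k+1}-v^*), y^{k+1}-x^*\rangle$; from \eqref{eq:xerror} we get the two inner products $2\langle x^{k+1}-y^{k+1}, \gamma\nabla f_1(x^k)+\lambda B^Tv^k\rangle$ and $-2\langle x^{k+1}-x^*, \gamma\nabla f_1(x^k)+\lambda B^Tv^{k+1}\rangle$, plus the $f_3$ terms. The key observation is that $x^*$ satisfies the fixed-point relation \eqref{eqkey2}, i.e. $x^* = \prox_{\gamma f_3}(x^*-\gamma\nabla f_1(x^*)-\lambda B^Tv^*)$, so by \eqref{lem_Fema} we have $(x^*-\gamma\nabla f_1(x^*)-\lambda B^Tv^*)-x^* \in \gamma\partial f_3(x^*)$, which by \eqref{partialf} gives $\langle y^{k+1}-x^*, -\gamma\nabla f_1(x^*)-\lambda B^Tv^*\rangle \le \gamma(f_3(y^{k+1})-f_3(x^*))$. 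This inequality is what absorbs the leftover $+2\gamma(f_3(x^*)-f_3(y^{k+1}))$ term and, crucially, introduces the terms $-2\gamma\langle y^{k+1}-x^*,\nabla f_1(x^*)\rangle$ and $-2\lambda\langle y^{k+1}-x^*, B^Tv^*\rangle$ needed to complete squares.

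The bookkeeping step is then to regroup. I would aim to show that the accumulated gradient- and $B^T v$-terms, together with $-\|x^{k+1}-y^{k+1}\|^2-\|x^k-y^{k+1}\|^2$ from \eqref{eq:xerror} and $-\lambda\|v^{k+1}-v^k\|^2$ from $\lambda\cdot\eqref{eq:verror}$, reassemble exactly into
\[
-\lambda\|v^{k+1}-v^k\|_M^2 - \|x^{k+1}-y^{k+1}+\lambda B^T(v^{k+1}-v^k)\|^2 - \|(x^k-y^{k+1})-\gamma(\nabla f_1(x^k)-\nabla f_1(x^*))\|^2
\]
up to the term $-2\gamma\langle x^k-y^{k+1}, \nabla f_1(x^k)-\nabla f_1(x^*)\rangle$ that appears when expanding the last square; here the identity $\|v^{k+1}-v^k\|^2 = \|v^{k+1}-v^k\|_M^2 + \lambda\|B^T(v^{k+1}-v^k)\|^2$ (from $M=I-\lambda BB^T$, see \eqref{notation_M}) is used to split one of the $v$-difference norms, providing the $\lambda B^T(v^{k+1}-v^k)$ piece inside the middle square. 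Finally, the genuinely analytic input is \eqref{lem_LCG}: the residual cross term $-2\gamma\langle x^k-y^{k+1},\nabla f_1(x^k)-\nabla f_1(x^*)\rangle$ is not by itself sign-definite, but after writing $x^k-y^{k+1} = (x^k-x^*)-(y^{k+1}-x^*)$ — or more directly, by completing the square against $\|(x^k-y^{k+1})-\gamma(\nabla f_1(x^k)-\nabla f_1(x^*))\|^2$ — what remains is controlled by $\gamma^2\|\nabla f_1(x^k)-\nabla f_1(x^*)\|^2 - 2\gamma\beta\|\nabla f_1(x^k)-\nabla f_1(x^*)\|^2 = -\gamma(2\beta-\gamma)\|\nabla f_1(x^k)-\nabla f_1(x^*)\|^2$, which is the last term in \eqref{keyineq01} and is non-positive precisely when $\gamma<2\beta$.

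The main obstacle is the algebraic regrouping in the middle step: there are many cross terms involving $\nabla f_1(x^k)$, $\nabla f_1(x^*)$, $B^Tv^k$, $B^Tv^{k+1}$, $B^Tv^*$ and the differences $x^{k+1}-x^*$, $x^{k+1}-y^{k+1}$, $x^k-y^{k+1}$, $y^{k+1}-x^*$, and it takes care to verify that the ones not absorbed into the three perfect squares cancel exactly, leaving only the gradient-difference term to be handled by \eqref{lem_LCG}. I would organize this by introducing shorthand for the repeated quantities $\gamma\nabla f_1(x^k)+\lambda B^Tv^{k+1}$ etc., expanding each target square on the right-hand side of \eqref{keyineq01}, and matching term by term against the sum $\lambda\cdot\eqref{eq:verror}+\eqref{eq:xerror}$ augmented by the subdifferential inequality at $x^*$; no step beyond this is conceptually hard once the cancellations are laid out.
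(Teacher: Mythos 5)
Your proposal follows essentially the same route as the paper's proof: form $\lambda\cdot\eqref{eq:verror}+\eqref{eq:xerror}$, control the $f_3$ difference via the subdifferential inequality at $x^*$ coming from \eqref{eqkey2} (the paper's inequality \eqref{xvcombine}), complete the three squares using $M=I-\lambda BB^T$ exactly as you describe, and finish with the co-coercivity bound \eqref{lem_LCG} to produce the $-\gamma(2\beta-\gamma)\|\nabla f_1(x^k)-\nabla f_1(x^*)\|^2$ term. All the key ingredients and the regrouping identity you outline match the paper's argument, so the plan is correct as stated.
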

\begin{proof}
Summing the two inequalities \eqref{eq:verror} and \eqref{eq:xerror} and re-arranging the terms, we have
\begin{align}
&\lambda\|v^{k+1}-v^*\|^2+\|x^{k+1}-x^*\|^2\nonumber\\
%%%%%%%%%%%%%%%%%%%%%%%%%%%%%%%%%%%%%%%%%%%%%%%%%%%%%%%%%%%%%%%%%%%%%%%%%%%%%%%%%%%%%%%%%%%%%%%%%%%%%%%%%%%%%%%%%%
\leq &\lambda\|v^{k}-v^*\|^2+\|x^{k}-x^*\|^2-\lambda\|v^{k+1}-v^{k}\|^2-\|x^{k+1}-y^{k+1}\|^2-\|x^k-y^{k+1}\|^2\nonumber\\
&+2 \langle\lambda B^T (v^{k+1}-v^*),y^{k+1}-x^*\rangle+2\langle  x^{k+1}-y^{k+1}, \gamma\nabla {f_1}(x^k)+\lambda B^Tv^k \rangle\nonumber\\
&-2\langle  x^{k+1}-x^*, \gamma\nabla {f_1}(x^k)+\lambda B^Tv^{k+1}\rangle+2\gamma ({f_3}(x^*)-{f_3}(y^{k+1}))\nonumber\\
= &\lambda\|v^{k}-v^*\|^2+\|x^{k}-x^*\|^2-\lambda\|v^{k+1}-v^{k}\|^2-\|x^{k+1}-y^{k+1}\|^2-\|x^k-y^{k+1}\|^2\nonumber\\
  &+2 \langle\lambda B^T (v^{k+1}-v^k),y^{k+1}-x^{k+1}\rangle+2\langle x^k -y^{k+1},\gamma\nabla {f_1}(x^k)-\gamma\nabla {f_1}(x^*)\rangle\nonumber\\
 &-2\langle x^k-x^*,\gamma\nabla {f_1}(x^k)-\gamma\nabla {f_1}(x^*)\rangle\nonumber\\
&+2(\langle y^{k+1}-x^*,-\gamma \nabla {f_1}(x^*)-\lambda B^T v^* \rangle+\gamma ({f_3}(x^*)-{f_3}(y^{k+1})))\nonumber\\
=&\lambda\|v^{k}-v^*\|^2+\|x^{k}-x^*\|^2-\lambda\|v^{k+1}-v^{k}\|_M^2-\|x^{k+1}-y^{k+1}+\lambda B^T(v^{k+1}-v^k)\|^2\nonumber\\
 &-\|(x^k-y^{k+1})-(\gamma\nabla {f_1}(x^k)-\gamma\nabla {f_1}(x^*))\|^2+\|\gamma\nabla {f_1}(x^k)-\gamma\nabla {f_1}(x^*)\|^2\label{midkeyneq} \nonumber\\
 &-2\langle x^k-x^*,\gamma\nabla {f_1}(x^k)-\gamma\nabla {f_1}(x^*)\rangle
\nonumber\\
&+2(\langle y^{k+1}-x^*,-\gamma \nabla {f_1}(x^*)-\lambda B^T v^* \rangle+\gamma ({f_3}(x^*)-{f_3}(y^{k+1}))),
\end{align}
where $\|\cdot\|_M$ is given in \eqref{notation_M} and \eqref{notation_M_norm}.
Meanwhile, by the optimality condition of \eqref{eqkey2}, we have
\begin{align*}
  -\gamma\nabla {f_1}(x^*)-\lambda B^Tv^*\in \gamma\partial {f_3}(x^*),
\end{align*}
which implies
\begin{align}
   \langle y^{k+1}-x^*,-\gamma\nabla {f_1}(x^*)-\lambda B^Tv^*\rangle+\gamma ({f_3}(x^{*})- {f_3}(y^{k+1}))\leq 0.
	\label{xvcombine}
\end{align}
On the other hand, it follows from {\eqref{lem_LCG}} that
\begin{align}
   -\langle x^k- x^*,\nabla {f_1}(x^k)-\nabla {f_1}(x^*)\rangle\leq -{\beta}\|\nabla {f_1}(x^k)-
   \nabla {f_1}(x^*)\|^2.
   \label{eqbasicinequality02}
\end{align}
Recalling \eqref{notation_lambda_norm}, we immediately obtain \eqref{keyineq01} in terms of \eqref{midkeyneq}-\eqref{eqbasicinequality02}.
\end{proof}

\begin{lem}
\label{lem:sequence}
Let $0<\lambda<1/{\lambda_{\max}(BB^T)}$ and $0<\gamma<2\beta$. Then {the sequence $\{\|u^{k}-u^*\|_\lambda\}$ is non-increasing} and
$
\lim_{k\to +\infty}  \|u^{k+1}-u^k\|_\lambda=0.
$
\end{lem}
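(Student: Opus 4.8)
The plan is to read off everything from the key inequality \eqref{keyineq01} of Lemma \ref{lem:sequencekeyineq}. First I would note that, under the stated restrictions on the parameters, every quantity subtracted on the right-hand side of \eqref{keyineq01} is nonnegative: since $0<\lambda<1/\lambda_{\max}(BB^T)$, the matrix $M=I-\lambda BB^T$ is symmetric positive definite, so $\|v^{k+1}-v^k\|_M^2\ge 0$; the two middle terms are squared Euclidean norms; and $\gamma(2\beta-\gamma)>0$ because $0<\gamma<2\beta$. Hence \eqref{keyineq01} immediately gives $\|u^{k+1}-u^*\|_\lambda^2\le\|u^k-u^*\|_\lambda^2$ for every $k$, which is the monotonicity claim.

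Next, since $\{\|u^k-u^*\|_\lambda^2\}$ is non-increasing and bounded below by $0$, it converges, so $\|u^k-u^*\|_\lambda^2-\|u^{k+1}-u^*\|_\lambda^2\to 0$. Looking again at \eqref{keyineq01}, each of the four nonnegative terms is dominated by this difference, so each of them tends to $0$. In particular $\lambda\|v^{k+1}-v^k\|_M^2\to 0$, and because $M$ is positive definite (equivalence of $\|\cdot\|_M$ with $\|\cdot\|$) this forces $\|v^{k+1}-v^k\|\to 0$; similarly $\|\nabla f_1(x^k)-\nabla f_1(x^*)\|\to 0$, $\|x^{k+1}-y^{k+1}+\lambda B^T(v^{k+1}-v^k)\|\to 0$, and $\|(x^k-y^{k+1})-(\gamma\nabla f_1(x^k)-\gamma\nabla f_1(x^*))\|\to 0$.

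It then remains to show $\|u^{k+1}-u^k\|_\lambda^2=\lambda\|v^{k+1}-v^k\|^2+\|x^{k+1}-x^k\|^2\to 0$. The first summand already tends to $0$. For the second, I would split $x^{k+1}-x^k=(x^{k+1}-y^{k+1})+(y^{k+1}-x^k)$ and control each piece by the triangle inequality: from the third limit above together with $\|\gamma\nabla f_1(x^k)-\gamma\nabla f_1(x^*)\|\to 0$ we get $\|y^{k+1}-x^k\|\to 0$, while from the second limit together with $\|\lambda B^T(v^{k+1}-v^k)\|\le\lambda\|B^T\|_2\|v^{k+1}-v^k\|\to 0$ we get $\|x^{k+1}-y^{k+1}\|\to 0$. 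Adding these yields $\|x^{k+1}-x^k\|\to 0$, hence $\|u^{k+1}-u^k\|_\lambda\to 0$.

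I do not expect a genuine obstacle here: the lemma is essentially a bookkeeping consequence of \eqref{keyineq01}. The only points needing mild care are the passage from $\|v^{k+1}-v^k\|_M\to 0$ to $\|v^{k+1}-v^k\|\to 0$, which uses positive definiteness of $M$, and keeping track of which auxiliary differences ($y^{k+1}-x^k$, $x^{k+1}-y^{k+1}$, $\lambda B^T(v^{k+1}-v^k)$, $\nabla f_1(x^k)-\nabla f_1(x^*)$) combine to recover $x^{k+1}-x^k$.
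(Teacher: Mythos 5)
Your proposal is correct and follows essentially the same route as the paper: monotonicity is read off from \eqref{keyineq01} using positivity of $M$ and of $\gamma(2\beta-\gamma)$, the four subtracted terms are shown to vanish (the paper sums the telescoping inequalities, you use convergence of the monotone bounded sequence --- the same thing), and the same bookkeeping with $y^{k+1}-x^k$, $x^{k+1}-y^{k+1}$ and $\lambda B^T(v^{k+1}-v^k)$ recovers $\|x^{k+1}-x^k\|\to 0$.
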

\begin{proof}
If $0<\lambda< 1/{\lambda_{\max}(BB^T)}$ and $0<\gamma<2\beta$, it follows from \eqref{keyineq01} that $\|u^{k+1}-u^*\|_\lambda\leq \|u^{k}-u^*\|_\lambda$, i.e. the sequence $\{\|u^{k}-u^*\|_\lambda\}$ is non-increasing. Moreover, summing  the inequalities \eqref{keyineq01} from $k=0$ to $k=+\infty$, we get
\begin{align}
   &\lim_{k\to +\infty}   \|v^{k+1}-v^k\|_M=0, \label{eqTconvergence05}\\
   &\lim_{k\to +\infty}   \|x^{k+1}-y^{k+1}+\lambda B^T(v^{k+1}-v^k)\|=0,\label{eqTconvergence04}\\
   &\lim_{k\to +\infty}   \|(x^k-y^{k+1})-(\gamma\nabla {f_1}(x^k)-\gamma\nabla {f_1}(x^*))\|=0,\label{eqTconvergence06}\\
   &\lim_{k\to +\infty}   \|\nabla {f_1}(x^k)-\nabla {f_1}(x^*)\|=0.\label{eqTconvergence07}
\end{align}
The combination of \eqref{eqTconvergence06} and \eqref{eqTconvergence07} gives
\begin{align}
   \lim_{k\to +\infty}  \|x^k-y^{k+1}\|=0. \label{eqTconvergence0c}
\end{align}
Noting that $0<\lambda<1/{\lambda_{\max}(BB^T)}$, we know $M$ is  positive symmetric definite, so \eqref{eqTconvergence05} is equivalent to
\begin{align}
   \lim_{k\to +\infty}  \|v^{k+1}-v^k\|=0. \label{eqTconvergence0b}
\end{align}
Hence, we have from the above inequality and \eqref{eqTconvergence04} that
\begin{align}
   \lim_{k\to +\infty}  \|x^{k+1}-y^{k+1}\|=0. \label{eqTconvergence0d}
\end{align}
The combination of \eqref{eqTconvergence0c} and  \eqref{eqTconvergence0d} then gives rise to
\begin{align}
   \lim_{k\to +\infty}   \|x^{k+1}-x^{k}\|=0. \label{eqTconvergence0a}
\end{align}
According to \eqref{eqTconvergence0b}, \eqref{eqTconvergence0a} and \eqref{notation_lambda_norm},  we have
$
\lim_{k\to +\infty}  \|u^{k+1}-u^k\|_\lambda=0.
$
%we prove the required result.
\end{proof}

As a direct consequence of Lemma \ref{lem:sequence} and Lemma \ref{lem:convergence}, we obtain the convergence of {PDFP} as follows.
\begin{theorem}\label{theorem:convergence}
Let $0<\lambda<1/{\lambda_{\max}(BB^T)}$ and $0<\gamma<2\beta$. Then the sequence $\{u^{k}\}$ is bounded and converges  to a fixed-point of $T$, and  $\{x^{k}\}$ converges to a solution of \eqref{PDFP2O3B:eqbasic}.
\end{theorem}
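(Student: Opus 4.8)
The plan is to assemble the statement directly from the machinery already established: Theorem~\ref{theorem_fixedpoint}, Lemma~\ref{lem:sequence}, and the abstract convergence principle Lemma~\ref{lem:convergence}. Since the problem \eqref{PDFP2O3B:eqbasic} is assumed to possess a solution $x^*$, Theorem~\ref{theorem_fixedpoint} guarantees that $T$ has at least one fixed point $u^*=(v^*,x^*)$, so Lemma~\ref{lem:sequence} is applicable and not vacuous. A preliminary remark I would make is that the norm $\|\cdot\|_\lambda$ of \eqref{notation_lambda_norm} is equivalent to the canonical Euclidean norm on $\mathbb{R}^m\times\mathbb{R}^n$, namely $\min\{\sqrt\lambda,1\}\,\|u\|\le\|u\|_\lambda\le\max\{\sqrt\lambda,1\}\,\|u\|$; hence boundedness, the non-increasing property, the asymptotic regularity $\|u^{k+1}-u^k\|\to 0$, and convergence are all insensitive to replacing $\|\cdot\|$ by $\|\cdot\|_\lambda$, and Lemma~\ref{lem:convergence} may be invoked with respect to $\|\cdot\|_\lambda$.

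Next I would verify hypothesis (i) of Lemma~\ref{lem:convergence}, the continuity of $T$. The map $g(x)=x-\gamma\nabla f_1(x)$ from \eqref{notation_g} is continuous because $\nabla f_1$ is $1/\beta$-Lipschitz, so $(v,x)\mapsto g(x)-\lambda B^T v$ is jointly continuous; composing with $\prox_{\gamma f_3}$, which is nonexpansive and therefore continuous by Lemma~\ref{lem_proximity}, shows $T_0$ is continuous. Then $T_1=(I-\prox_{\frac{\gamma}{\lambda}f_2})(B\circ T_0+v)$ is continuous, again using Lemma~\ref{lem_proximity} together with continuity of linear maps, and $T_2$ is continuous by the same reasoning with $T_1$ in place of $v$; hence $T=(T_1,T_2)$ is continuous. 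Hypotheses (ii) and (iii) — that $\{\|u^k-u^*\|_\lambda\}$ is non-increasing and $\|u^{k+1}-u^k\|_\lambda\to 0$ — are precisely the two conclusions of Lemma~\ref{lem:sequence} under the stated ranges $0<\lambda<1/\lambda_{\max}(BB^T)$ and $0<\gamma<2\beta$.

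Applying Lemma~\ref{lem:convergence} then yields that $\{u^k\}$ is bounded and converges to some fixed point $\widehat u=(\widehat v,\widehat x)$ of $T$ (not necessarily the $u^*$ used to run the monotonicity argument). By the converse direction of Theorem~\ref{theorem_fixedpoint}, $\widehat x$ is a solution of \eqref{PDFP2O3B:eqbasic}; since $x^k\to\widehat x$, the sequence $\{x^k\}$ converges to a solution of \eqref{PDFP2O3B:eqbasic}, which completes the proof. There is no genuinely hard step here: the substantive work has already been absorbed into Lemma~\ref{lem:sequencekeyineq} and Lemma~\ref{lem:sequence}. The only points needing a little care are noting that Lemma~\ref{lem:convergence}, phrased for the Euclidean norm, transfers to $\|\cdot\|_\lambda$ by norm equivalence, and keeping straight that the fixed point delivered by the limit need not coincide with the reference fixed point appearing in the non-increasing estimate.
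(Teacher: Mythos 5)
Your proposal is correct and follows essentially the same route as the paper's proof: continuity of $T$ from the (firm) nonexpansiveness of the proximity operators and the Lipschitz continuity of $\nabla f_1$, then Lemma \ref{lem:sequence} to supply hypotheses (ii)--(iii) of Lemma \ref{lem:convergence}, and finally Theorem \ref{theorem_fixedpoint} to identify the limit's $x$-component as a solution. Your added remarks on the equivalence of $\|\cdot\|_\lambda$ with the Euclidean norm and on the distinction between the reference fixed point and the limiting fixed point are sensible refinements that the paper leaves implicit.
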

\begin{proof}
By Lemma \ref{lem_proximity}, both $\prox_{{\gamma}{f_3}}$ and $I-\prox_{\frac{\gamma}{\lambda}{f_1}}$ are firmly nonexpansive, thus the operator $T$ defined by \eqref{notation_Ty}-\eqref{notation_T} is continuous.
From Lemma \ref{lem:sequence}, we know that the sequence  $\{\|u^k-u^*\|_\lambda\}$ is non-increasing and $\lim_{k\to +\infty}  \|u^{k+1}-u^k\|_\lambda=0$.
By using Lemma \ref{lem:convergence}, we know that the sequence $\{u^{k}\}$ is bounded and converges  to a fixed-point of $T$.
%Consequently, the limit ${x}^*$ is the solution of problem \eqref{PDFP2O3B:eqbasic}.
By using Theorem \ref{theorem_fixedpoint}, $\{x^{k}\}$ converges to a solution of \eqref{PDFP2O3B:eqbasic}.
\end{proof}

%%%%%%%%%%%%%%%%%%%%%%%%%%%%%%%%%%%%%%%%%%%%%%
%    Remark    f3=0  convergence             %
%%%%%%%%%%%%%%%%%%%%%%%%%%%%%%%%%%%%%%%%%%%%%%
\begin{remark}\label{remark:thereason}
For the special case $f_3=0$, the PDFP reduces naturally to PDFP$^2$O \eqref{formbasicPDFP2O} proposed in \cite{CHZ13},
where the conditions for the parameters are $0<\lambda\leq 1/\lambda_{\max}(BB^T)$, $0<\gamma<2\beta$. In Theorem \ref{theorem:convergence}, the condition for the parameter $\lambda$ is slightly more restricted as $0<\lambda< 1/\lambda_{\max}(BB^T)$. It is easy to see when $f_3=0$, the equation  \eqref{keyineq01} in Lemma \ref{lem:sequencekeyineq}  reduces to
%\begin{align}\label{xv01}
%    -\|x^{k+1}-y^{k+1}+\lambda B^T(v^{k+1}-v^k)\|^2=0,
%\end{align}
%%\begin{align}
%%   \langle y^{k+1}-x^*,-\gamma\nabla {f_1}(x^*)-\lambda B^Tv^*\rangle+\gamma ({f_3}(x^{*})- {f_3}(y^{k+1}))= 0，
%%	\label{xv02}
%%\end{align}
%\begin{align}\label{xv03}
%    -\|(x^k-y^{k+1})-(\gamma\nabla {f_1}(x^k)-\gamma\nabla {f_1}(x^*))\|^2=-\|\lambda B^T(v^k-v^*)\|^2.
%\end{align}
%So is deduced to
\begin{align*}
\|u^{k+1}-u^*\|_\lambda^2
\leq&
\|u^k-u^*\|_\lambda^2-\lambda\|v^{k+1}-v^{k}\|^2_M-\|\lambda B^T(v^k-v^*)\|^2\nonumber\\
&-\gamma (2\beta- {\gamma} )\|\nabla {f_1}(x^k)-\nabla {f_1}(x^*)\|^2,
%\label{keyineq01deduce}
\end{align*}
and the conditions in the proof of Lemma \ref{lem:sequence} can be relaxed to $0<\lambda\leq 1/\lambda_{\max}(BB^T)$. However, for a general $f_3$, the condition can not be relaxed to ensure the positive definitiveness of the matrix $M$, which is needed for the uniform convergence.
\end{remark}

\begin{remark}\label{remark:f10}
For the special case $f_1=0$, the problem \eqref{PDFP2O3B:eqbasic} reduces to two-block  proper lower semicontinuous convex functions without Lipschitz continuous gradient assumptions. The condition $0<\gamma<2\beta$ in PDFP becomes $0<\gamma<+\infty$. Although, $\gamma$ is an arbitrary positive number in theory, but the range of $\gamma$ will affect the convergence speed and it is also a difficult problem to choose a best value in practice.
\end{remark}

\subsection{Linear convergence rate for special cases}
\label{seq:Convergence_Rate}
In the following, we will show the convergence rate results with some additional assumptions on the  basic problem \eqref{PDFP2O3B:eqbasic}.
In particular, for $f_3=0$, the algorithm reduces to PDFP$^2$O proposed in \cite{CHZ13}. The conditions for a linear convergence  given there as   condition 3.1 in \cite{CHZ13} is as followed:    for $0<\lambda\leq 1/\lambda_{\max}(BB^T)$ and $0<\gamma<{2}\beta$, there exist $\eta_1$, $\eta_2\in [0,1)$ such that
\begin{align}
    &\|I-\lambda BB^T\|_2\leq\eta_1^2,\nonumber\\%\label{condition01}\\
    &\|g(x)-g(y)\|\le \eta_2 \|x-y\|\mbox{ for all } x,\ y\in \mathbb{R}^n,\label{condition02}
\end{align}
where %$\|\cdot\|_2$ denote the 2-norm of a matrix,
$g(x)$ is given in \eqref{notation_g}.
It is easy to see that a strongly convex function  $f_1$ satisfies the condition \eqref{condition02}.  For a general $f_3$, we need stronger conditions on the functions.
\begin{theorem}\label{prop_contraction_ratio}
   Suppose  that \eqref{condition02} holds and  $f_2^*$ is strongly convex. Then, we have \begin{align*}
   \|u^{k+1}-u^*\|_{(1+\lambda\delta/\gamma)\lambda}
   \leq \eta \|u^k-u^*\|_{(1+\lambda\delta/\gamma)\lambda},
\end{align*}
where $0<\eta<1$ is the convergence rate (indicated in the proof) and $\delta>0$ is a parameter describing  the strongly monotone property of $\partial f_2^*$ (cf. \eqref{definition:stronglymonotone}).
\end{theorem}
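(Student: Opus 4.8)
The plan is to push the fundamental estimate \eqref{keyineq01} one notch further, using the strong convexity of $f_2^*$ to manufacture the extra dual weight $\lambda\delta/\gamma$, and then to invoke the two conditions in \eqref{condition02} to convert the leftover nonnegative terms into a genuine contraction.

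\textbf{Step 1 (sharpening the dual estimate).} Since $f_2^*$ is strongly convex, $\partial f_2^*$ is $\delta$-strongly monotone. By \eqref{lem_keylem} and the conjugacy relation $v\in\partial(\frac{\gamma}{\lambda}f_2)(w)\Leftrightarrow w\in\partial f_2^*(\frac{\lambda}{\gamma}v)$ (equivalently, by the Moreau identity \eqref{lem_Moreau_decomposition}, $I-\prox_{\frac{\gamma}{\lambda}f_2}=\frac{\gamma}{\lambda}\prox_{\frac{\lambda}{\gamma}f_2^*}(\frac{\lambda}{\gamma}\cdot)$), the update \eqref{formbasic3Bb} yields $By^{k+1}+v^k-v^{k+1}\in\partial f_2^*(\frac{\lambda}{\gamma}v^{k+1})$, while \eqref{eqkey3} yields $Bx^*\in\partial f_2^*(\frac{\lambda}{\gamma}v^*)$. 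Feeding these into $\delta$-strong monotonicity upgrades the firm-nonexpansiveness bound used for \eqref{eq:verror}, namely $\|v^{k+1}-v^*\|^2\le\langle v^{k+1}-v^*,(By^{k+1}+v^k)-(Bx^*+v^*)\rangle$, to
\begin{align*}
\Big(1+\frac{\lambda\delta}{\gamma}\Big)\|v^{k+1}-v^*\|^2\le\langle v^{k+1}-v^*,(By^{k+1}+v^k)-(Bx^*+v^*)\rangle,
\end{align*}
hence, repeating the algebra that produced \eqref{eq:verror},
\begin{align*}
\|v^{k+1}-v^*\|^2\le\|v^k-v^*\|^2-\|v^{k+1}-v^k\|^2+2\langle B^T(v^{k+1}-v^*),y^{k+1}-x^*\rangle-\frac{2\lambda\delta}{\gamma}\|v^{k+1}-v^*\|^2.
\end{align*}
Re-running the bookkeeping of Lemma \ref{lem:sequencekeyineq} verbatim with this improved inequality in place of \eqref{eq:verror} then gives
\begin{align*}
\|u^{k+1}-u^*\|_{(1+\lambda\delta/\gamma)\lambda}^2+\frac{\lambda^2\delta}{\gamma}\|v^{k+1}-v^*\|^2\le\|u^k-u^*\|_\lambda^2-\lambda\|v^{k+1}-v^k\|^2_M-S_k,
\end{align*}
where $S_k\ge 0$ collects the three remaining squared terms of \eqref{keyineq01}, i.e. $\|x^{k+1}-y^{k+1}+\lambda B^T(v^{k+1}-v^k)\|^2+\|(x^k-y^{k+1})-\gamma(\nabla f_1(x^k)-\nabla f_1(x^*))\|^2+\gamma(2\beta-\gamma)\|\nabla f_1(x^k)-\nabla f_1(x^*)\|^2$. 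This already explains why the weight $1+\lambda\delta/\gamma$ shows up in the statement.

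\textbf{Step 2 (turning the descent into a contraction).} Writing $c=1+\lambda\delta/\gamma$, so that $\|u^k-u^*\|_{c\lambda}^2=\|u^k-u^*\|_\lambda^2+\frac{\lambda^2\delta}{\gamma}\|v^k-v^*\|^2$, it suffices to bound $\frac{\lambda^2\delta}{\gamma}\|v^{k+1}-v^*\|^2+\lambda\|v^{k+1}-v^k\|_M^2+S_k$ from below by a positive multiple of $\|u^k-u^*\|_{c\lambda}^2$. For the dual part I would use $\|v^k-v^*\|\le\|v^{k+1}-v^*\|+\|v^{k+1}-v^k\|$ together with $\|v^{k+1}-v^k\|_M^2\ge(1-\lambda\lambda_{\max}(BB^T))\|v^{k+1}-v^k\|^2$ to control $\lambda\|v^k-v^*\|^2$ and the residual $\frac{\lambda^2\delta}{\gamma}\|v^k-v^*\|^2$. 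For the primal part the two hypotheses in \eqref{condition02} enter: since $\prox_{\gamma f_3}$ is nonexpansive and $x^*=\prox_{\gamma f_3}(g(x^*)-\lambda B^Tv^*)$ by \eqref{eqkey2}, one has $\|y^{k+1}-x^*\|\le\eta_2\|x^k-x^*\|+\lambda\|B\|_2\|v^k-v^*\|$ and likewise $\|x^{k+1}-x^*\|\le\eta_2\|x^k-x^*\|+\lambda\|B\|_2\|v^{k+1}-v^*\|$; combining these with the identity $(x^k-y^{k+1})-\gamma(\nabla f_1(x^k)-\nabla f_1(x^*))=(g(x^k)-g(x^*))-(y^{k+1}-x^*)$ lets one recover $\|x^k-x^*\|^2$ out of $S_k$ up to multiples of $\|v^k-v^*\|^2$ and $\|v^{k+1}-v^k\|^2$ that are reabsorbed by the dual terms, while $\|I-\lambda BB^T\|_2\le\eta_1^2$ keeps every $B$-dependent constant under control. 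Collecting the constants produces an explicit $\eta=\eta(\eta_1,\eta_2,\lambda,\gamma,\delta)\in(0,1)$ with $\|u^{k+1}-u^*\|_{c\lambda}^2\le\eta^2\|u^k-u^*\|_{c\lambda}^2$.

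The main obstacle is exactly this last absorption. In the $f_3=0$ case of Remark \ref{remark:thereason} the primal residual collapses to $\lambda B^T(v^k-v^*)$ and a clean contraction falls out with no extra work; here the cross term $\langle B^T(v^{k+1}-v^*),y^{k+1}-x^*\rangle$ genuinely couples the primal and dual errors, and $y^{k+1},x^{k+1}$ are only accessible through nonexpansive proximity maps, so one has to apportion the four negative terms ($\frac{\lambda^2\delta}{\gamma}\|v^{k+1}-v^*\|^2$, $\lambda\|v^{k+1}-v^k\|_M^2$, and the two non-gradient pieces of $S_k$) between the competing lower bounds on $\lambda\|v^k-v^*\|^2$ and $\|x^k-x^*\|^2$ so that the resulting system of inequalities in $\eta_1,\eta_2,\delta,\lambda,\gamma$ is simultaneously satisfiable by some $\eta<1$. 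Tracking all of these constants so that the value of $\eta$ can actually be exhibited is the bookkeeping-heavy heart of the argument; the conceptual inputs — strong monotonicity of $\partial f_2^*$, nonexpansiveness of $\prox_{\gamma f_3}$, and $\|g(x)-g(y)\|\le\eta_2\|x-y\|$ — are all already in hand.
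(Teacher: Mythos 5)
Your Step 1 is exactly the paper's opening move: Moreau's identity turns \eqref{formbasic3Bb} into a proximity step for $f_2^*$, the $\delta$-strong monotonicity of $\partial f_2^*$ upgrades the firm-nonexpansiveness bound to $\langle v^{k+1}-v^*,(By^{k+1}+v^k)-(Bx^*+v^*)\rangle\ge(1+\lambda\delta/\gamma)\|v^{k+1}-v^*\|^2$, and the resulting extra negative multiple of $\|v^{k+1}-v^*\|^2$ is indeed what produces the weight $1+\lambda\delta/\gamma$ on the dual block. That part is correct and matches the paper.

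The gap is in Step 2, and it is created by re-running Lemma \ref{lem:sequencekeyineq} \emph{verbatim}: doing so applies the cocoercivity bound \eqref{eqbasicinequality02} and thereby converts all of the primal information into the nonnegative remainder $S_k$, leaving $\|x^k-x^*\|^2$ with coefficient $1$ on the right-hand side. You then have to win back a term $(1-\eta^2)\|x^k-x^*\|^2$ from $S_k$, which is precisely the absorption you flag as the hard part but do not carry out — and the route you sketch does not close as stated, because $\|(g(x^k)-g(x^*))-(y^{k+1}-x^*)\|^2$ can vanish while $x^k\neq x^*$, and your triangle-inequality estimates on $\|y^{k+1}-x^*\|$ and $\|x^{k+1}-x^*\|$ are upper bounds, hence cannot supply the needed lower bound on $S_k$. (A rescue exists: \eqref{condition02} gives $\gamma\|\nabla f_1(x^k)-\nabla f_1(x^*)\|\ge(1-\eta_2)\|x^k-x^*\|$ by the reverse triangle inequality, so the gradient piece of $S_k$ does control $\|x^k-x^*\|^2$ when $\gamma<2\beta$; but the constants must then be balanced and the resulting rate is worse.) The paper sidesteps all of this by \emph{not} invoking \eqref{eqbasicinequality02} in the summation step; it keeps the exact identity
\begin{align*}
\gamma^2\|\nabla f_1(x^k)-\nabla f_1(x^*)\|^2-2\gamma\langle x^k-x^*,\nabla f_1(x^k)-\nabla f_1(x^*)\rangle=\|g(x^k)-g(x^*)\|^2-\|x^k-x^*\|^2,
\end{align*}
drops the remaining nonnegative squares, and bounds $\|g(x^k)-g(x^*)\|^2\le\eta_2^2\|x^k-x^*\|^2$ via \eqref{condition02}, arriving at
\begin{align*}
(1+\lambda\delta/\gamma)\lambda\|v^{k+1}-v^{*}\|^2+\|x^{k+1}-x^{*}\|^2\le\lambda\|v^{k}-v^{*}\|^2+\eta_2^2\|x^{k}-x^{*}\|^2 .
\end{align*}
The contraction then follows by comparing coefficients blockwise, with $\eta=\max\{1/\sqrt{1+\lambda\delta/\gamma},\,\eta_2\}$ — no absorption, no $\|B\|_2$ constants, and in fact no use of the first inequality in \eqref{condition02} at all.
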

\begin{proof}
Use Moreau's identity (cf. \eqref{lem_Moreau_decomposition}) to get
\[
   (I-\prox_{\frac{\gamma}{\lambda}{f_2}})(By^{k+1}+v^k)
   =\frac{\gamma}{\lambda}\prox_{\frac{\lambda}{\gamma}{f_2^*}}
   (\frac{\lambda}{\gamma}By^{k+1}+\frac{\lambda}{\gamma}v^k).
\]
So \eqref{formbasic3Bb} is equivalent to
\begin{align}\label{formbasic3Bb02}
\frac{\lambda}{\gamma}v^{k+1}
   =\prox_{\frac{\lambda}{\gamma}{f_2^*}}
   (\frac{\lambda}{\gamma}By^{k+1}+\frac{\lambda}{\gamma}v^k).
\end{align}
According to the optimality condition of \eqref{formbasic3Bb02},
\begin{align}\label{inclusion01}
   \frac{\lambda}{\gamma}By^{k+1}+\frac{\lambda}{\gamma}v^k-\frac{\lambda}{\gamma}v^{k+1} \in \frac{\lambda}{\gamma}\partial{f_2^*}(\frac{\lambda}{\gamma}v^{k+1}).
\end{align}
Similarly, according to the optimality condition of \eqref{eqkey3},
\begin{align}\label{inclusion02}
   \frac{\lambda}{\gamma}Bx^* \in \frac{\lambda}{\gamma}\partial{f_2^*}(\frac{\lambda}{\gamma}v^*).
\end{align}
Observing that $\partial f_2^*$ is $\delta$-strongly monotone, we have by  \eqref{inclusion01} and \eqref{inclusion02} that
\begin{align*}
    \langle v^{k+1}-v^*,(B y^{k+1}+v^k-v^{k+1})-Bx^*\rangle \geq \frac{\lambda}{\gamma} \delta \|v^{k+1}-v^*\|^2,
\end{align*}
i.e.,
\begin{align*}
    \langle v^{k+1}-v^*,v^{k+1}-v^k\rangle &\leq  \langle B^T (v^{k+1}-v^*),y^{k+1}-x^*\rangle -\frac{\lambda}{\gamma}\delta \|v^{k+1}-v^*\|^2.
\end{align*}
Thus
\begin{align}\label{eq:v*vk1vk1v*02}
&\|v^{k+1}-v^*\|^2=\|v^{k}-v^*\|^2-\|v^{k+1}-v^k\|^2+2\langle v^{k+1}-v^*, v^{k+1}-v^k\rangle\nonumber\\
\leq &\|v^{k}-v^*\|^2-\|v^{k+1}-v^k\|^2+2 \langle B^T(v^{k+1}-v^*),y^{k+1}-x^*\rangle-\frac{\lambda}{\gamma}\delta \|v^{k+1}-v^*\|^2.
\end{align}

Summing the two inequalities \eqref{eq:v*vk1vk1v*02} and \eqref{eq:xerror}, and then using the same argument for driving  \eqref{midkeyneq}, we arrive at
\begin{align}
(1+\frac{\lambda}{\gamma} \delta)\lambda \|v^{k+1}-v^{*}\|^2+ \|x^{k+1}-x^{*}\|^2
\leq  &
\lambda\|v^k-v^*\|^2+ \|g(x^k)-g(x^*)\|^2 \nonumber
\\
\le& \lambda\|v^k-v^*\|^2+ \eta_2^2\|x^k-x^*\|^2,
\label{keyineq02}
\end{align}
where we have also used the condition \eqref{condition02}.

Let $\eta_1=1/\sqrt{1+\lambda\delta/\gamma}$, $\eta=\max\{\eta_1,\eta_2\}$. It is clear that $0<\eta<1$.  Hence, according to the notation \eqref{notation_lambda_norm}, the estimate \eqref{keyineq02} can be rewritten
% as
%\begin{align*}
%   \|u^{k+1}-u^*\|_{(1+\lambda\delta/\gamma)\lambda}
%   \leq \eta \|u^k-u^*\|_{(1+\lambda\delta/\gamma)\lambda},
%\end{align*}
as required.
\end{proof}

We note that a linear convergence rate for strongly convex $f_2^*$  and $f_3$ are obtained in \cite{LZ15}.  They  introduced two preconditioned operators for accelerating the algorithm, while a clear relation between the convergence rate and the preconditioned operators is still missing. Meanwhile, introducing preconditioned operators could be beneficial in practice, and we can also introduce a preconditioned operator to deal with $\nabla f_1$ in our scheme. Since the analysis is rather similar to the current one, we will omit it in this paper.

\section{Connections to other algorithms}
\label{seq:Connections}

In this section, we present the connections of the PDFP algorithm to some algorithms proposed previously in the literature. In particular, when $f_3=\chi_C$, due to $\prox_{\gamma f_3}= \mbox{proj}_{C}$, the proposed algorithm \eqref{formbasic3B} is reduced to  PAPA proposed in \cite{KLSX12}
\begin{flalign}
 \label{formbasicC2}
 &\hspace{2em}{(\mbox{PDFP}) \quad }
 \left \{
  \begin{aligned}
   &y^{k+1}=\mbox{proj}_C(x^k-\gamma\nabla {{f_1}}(x^k)-{\lambda} B^T  v^{k}),\\
   &v^{k+1}=(I-\prox_{\frac{\gamma}{\lambda}{{f_2}}})(By^{k+1}+v^k),\\
   &x^{k+1}=\mbox{proj}_C(x^k-\gamma\nabla {{f_1}}(x^k)-{\lambda} B^T  v^{k+1}),\\
 \end{aligned}\right.&
\end{flalign}
where $0<\lambda< 1/\lambda_{\max}(BB^T)$, $0<\gamma<2\beta$. We note that  the conditions of the parameters for the convergence  of PDFP are larger than those in \cite{KLSX12}. Here we still refer  \eqref{formbasicC2} as {PDFP}, since PAPA originally proposed  in \cite{KLSX12} incorporates other techniques such as  diagonal preconditioning.   For the special case $f_3=0$, due to $\prox_{\gamma f_3}=I$, we obtain the PDFP$^2$O scheme proposed in \cite{CHZ13}
\begin{flalign}
 \label{formbasicPDFP2O}
 &\hspace{2em}{(\mbox{PDFP}^2\mbox{O}) \quad }
 \left \{
  \begin{aligned}
   &y^{k+1}=x^k-\gamma\nabla {{f_1}}(x^k)-{\lambda} B^T  v^{k},\\
   &v^{k+1}=(I-\prox_{\frac{\gamma}{\lambda}{{f_2}}})(By^{k+1}+v^k),\\
   &x^{k+1}=x^k-\gamma\nabla {{f_1}}(x^k)-{\lambda} B^T  v^{k+1},\\
 \end{aligned}\right.&
\end{flalign}
where $0<\lambda\leq 1/\lambda_{\max}(BB^T)$, $0<\gamma<2\beta$.
Based on PDFP$^2$O, we also proposed PDFP$^2$O$_C$ in \cite{CHZ1302} for $f_3=\chi_C$ as
\begin{flalign}
 \label{formbasicC1}
 &\hspace{2em} {(\mbox{PDFP}^2\mbox{O}_{C})\quad}
 \left \{
  \begin{aligned}
   &y^{k+1}=x^k-\gamma\nabla {{f_1}}(x^k)-{\lambda} B^T  v_1^{k}-{\lambda}v_2^{k},\\
   &v_1^{k+1}=(I-\prox_{\frac{\gamma}{\lambda}{{f_2}}})(By^{k+1}+v_1^k), \\
   &v_2^{k+1}=(I-\mbox{proj}_{C})(y^{k+1}+v_2^k), \\
   &x^{k+1}=x^k-\gamma\nabla {{f_1}}(x^k)-{\lambda} B^T  v_1^{k+1}-{\lambda}v_2^{k+1},\\
 \end{aligned}\right.&
\end{flalign}
where $0<\lambda\leq 1/(\lambda_{\max}(BB^T)+1)$, $0<\gamma<2\beta$.
Similar technique of extension to multi composite functions have also been used in \cite{C13,LSXZ15,TZW15}.  Compared to PDFP \eqref{formbasicC2}, the algorithm PDFP$^2$O$_C$  introduces an extra variable, while PDFP requires two times projections.
Most importantly, the primal variable at each iterate of PDFP is feasible, but maybe not for that of PDFP$^2$O$_C$. In addition, the permitted ranges of the parameters are also tighter in PDFP$^2$O$_C$.

%Most importantly, the primal variable at each iterate of PDFP is feasible, while that of PDFP$^2$O$_C$ is not necessarily the case. In addition, the permitted ranges of the parameters are also  tighter in PDFP$^2$O$_C$.

% Most importantly, every iterate in PDFP is a feasible solution, while the variable $x$ in PDFP$^2$O$_C$ may not be feasible. In addition, the permitted ranges of the parameters are also  tighter in PDFP$^2$O$_C$.

The other most related algorithm to the PDFP algorithm \eqref{formbasic3B} is  the  algorithm proposed by L. Condat in \cite{C13}. For the special case with $f_1=0$, Condat's algorithm reduces to PDHG method  in \cite{CP11}.  By grouping multi-block as a single block, the authors in \cite{TZW15}  extended the PDHG algorithm  \cite{PC11} to multi composite functions penalized problems. The authors in \cite{LSXZ15}  proposed a class of multi-step fixed-point proximity
algorithms,  including several existing algorithms as special examples, for example the algorithms in \cite{CP11,PC11}.
The three-block method proposed by Davis and Yin in \cite{DY15} are based on operator splitting  but subproblem solving is required when it is applied to  solve \eqref{PDFP2O3B:eqbasic} for the  case $B\neq I$.
Li and Zhang \cite{LZ15} also introduce preconditioned operators based on the techniques present in \cite{LSXZ15} and including  Condat's algorithm in \cite{C13} as a special case, and  further introduce quasi-Newton and the overrelaxation strategies to accelerate the algorithms. Specifically, we compare the PDFP algorithm \eqref{formbasic3B} with the basic Algorithm 3.2 in \cite{C13}.

In the following, we mainly compare PDFP to Condat's algorithm  \cite{C13} for  a simple presentation. We first change the form of PDFP algorithm \eqref{formbasic3B}  by using Moreau's identity, see \eqref{lem_Moreau_decomposition}, i.e.
\[
   (I-\prox_{\frac{\gamma}{\lambda}{f_2}})(By^{k+1}+ {v}^k)
   =\frac{\gamma}{\lambda}\prox_{\frac{\lambda}{\gamma}{f_2^*}}
   (\frac{\lambda}{\gamma}By^{k+1}+\overline{v}^k),
\]
 where $\overline{v}^k= \DF{\lambda}{\gamma}v^k$. A direct comparison is presented in Table \ref{table:cmp_Condat_PDFP2O}. From Table \ref{table:cmp_Condat_PDFP2O}, we can see that the ranges of the parameters in Condat's algorithm are relatively smaller than {PDFP}. Also since the condition for Condat's algorithm is mixed with all the parameters, it is not always easy to choose them in practice. This is also pointed out in \cite{C13}. While the rules for the parameters in {PDFP} are separate, and they can be chosen independently according to the Lipschitiz constant and the operator norm of $BB^T$. In this sense, our parameter rules are relatively more practical. In the numerical experiments, we can  set $\lambda$ to be close to $1/\lambda_{\max}(BB^T)$ and  $\gamma$ to be close to $2\beta$ for most of tests. Nevertheless, PDFP has an extra step \eqref{formbasic3Ba} compared to Condat's algorithm and the computation cost may increase  due to the computation of $\prox_{{\gamma}{f_3}}$. In practice, this step is often related to $\ell_1$ shrinkage, so the cost could be still ignorable in practice.

\begin{table}[!htp] \small
\begin{center}
 \caption{The comparison  between Condat ($\rho_k=1$) and  {PDFP}.}
 \label{table:cmp_Condat_PDFP2O}
 \begin{tabular}{|p{1.2cm}|p{7.3cm}|p{6.2cm}|}
   \hline
   & \centering{Condat ($\rho_k=1$)} &  \quad\quad\quad\quad\quad\quad\quad  {PDFP}\\
   \hline
   \multirow{3}{*}{\small   Form}&
   & \small    $y^{k+1}=\prox_{{\gamma}{f_3}}(x^k-\gamma\nabla {f_1}(x^k)-{\gamma} B^T \overline{ v}^k)$\\
   &\small   $\overline{v}^{k+1}=\prox_{\sigma{f_2^*}}(\sigma Bx^{k}+\overline{v}^k)$
   &\small   $\overline{v}^{k+1}=\prox_{\frac{\lambda}{\gamma}{f_2^*}}(\frac{\lambda}{\gamma}By^{k+1}+\overline{v}^k)$\\
   &\small   $x^{k+1}=\prox_{{\tau}{f_3}}(x^k-\tau\nabla {f_1}(x^k)-{\tau} B^T  (2\overline{v}^{k+1}-\overline{v}^{k}))$
   &\small   $x^{k+1}=\prox_{{\gamma}{f_3}}(x^k-\gamma\nabla {f_1}(x^k)-{\gamma} B^T  \overline{v}^{k+1})$\\
   \hline
   \small $  f_1\neq 0$&\small   $ \sigma\tau  \lambda_{\max}(BB^T)+\tau/(2\beta)\leq 1$ & \small   $ 0<\lambda< 1/\lambda_{\max}(BB^T)$, $0<\gamma<2\beta$   \\
   \hline
   \small $  f_1=0$&\small   $0<\sigma\tau\leq1/\lambda_{\max}(BB^T)$ &\small   $0<\lambda< 1/\lambda_{\max}(BB^T)$, $0<\gamma<+\infty$  \\
   \hline
   \small   Relation& \multicolumn{2}{|c|}{\small   $\sigma=\lambda/\gamma$, $\tau=\gamma$}\\
   \hline
 \end{tabular}
 \end{center}
\end{table}

\section{Numerical experiments}
\label{sec:Numerical_experiments}

In this section, we will apply the  {PDFP} algorithm to { solve}  two problems:   the fused LASSO penalized problems and parallel Magnetic Resonance Imaging (pMRI) reconstruction.  All the experiments are implemented under MATLAB7.00 (R14) and conducted on a computer with Intel (R) core (TM) i5-4300U CPU@1.90G.

\subsection{The fused LASSO penalized problems}\label{subsec:FLASSO}

The fused LASSO (Least absolute shrinkage and selection operator) penalized problems is proposed for group variable selection, and one can refer to \cite{LJY11,LYY10} for more details for the applications of this model. It  can be described as
\begin{align*}
\underset{x\in {\mathbb{R}^n}}{\mbox{ min}}\quad \frac 1 2\|Ax-a\|^2+ \mu_1 \sum_{i=1}^{n-1}|x_{i+1}-x_i|+\mu_2 \|x\|_1. %\label{fused_LASSO}
\end{align*}
Here $A\in \mathbb{R}^{r\times n}$, $a\in \mathbb{R}^r$. The row of $A$: $A_i$ for $i=1,2,\cdots, r$ represent the  $i\_$th observation of the independent variables and $a_i$ denotes the response variable, and the vector  $x\in \mathbb{R}^{n}$ is the regression coefficient to recover. The first term is corresponding to the data-fidelity term, and the last two terms aim to ensure the sparsity in
both $x$ and their successive differences in $x$.
Let
$
   B=\begin{pmatrix}
       -1&1\\
       &-1&1\\
       &&\ddots&\ddots\\
       &&&-1&1
     \end{pmatrix}.
$ Then the forgoing problem can be reformulated as %\eqref{fused_LASSO02}, i.e.
\begin{align}
 \underset{x\in {\mathbb{R}^n}}{\mbox{min}}\quad \frac 1 2\|Ax-a\|^2+\mu_1 \|Bx\|_1+ \mu_2 \|x\|_1.\label{fused_LASSO02}
\end{align}
For this example, we can set ${f_1}(x)=\frac 1 2\|Ax-a\|^2$, ${f_2}  =\mu_1\|\cdot\|_1$,   ${f_3} =\mu_2\|\cdot\|_1$.
We want to show that the {PDFP} algorithm \eqref{formbasic3B} can be applied to solve this  generic class of problem \eqref{fused_LASSO02} directly and easily.
%We illustrate how the {PDFP} algorithm \eqref{formbasic3B} can be  applied to solve this  generic class of problem.
%We note that other algorithms utilizing the  structure of the matrix $B$, for example  EFLA proposed in \cite{LYY10,LJY11}, may be more efficient for this specific problem.

The following tests are designed for the simulation. We set $r=500$, $n=10000$, and the data $a$ is generated as $Ax+\sigma e$, where $A$ and $e$ are random matrices whose elements are normally distributed with zero mean and variance 1, and  $\sigma=0.01$,  and $x$ is a generated sparse vector, whose nonzeros elements are showed in Figure \ref{figure:FLASSO_cmp}  by green '+'.  We set $\mu_1=200$, $\mu_2=20$ and the maximum iteration number as $Itn=1500$.

We compare the PDFP algorithm with Condat's algorithm \cite{C13}. For the PDFP algorithm, the parameter $\lambda$ and $\gamma$ are chosen according to Theorem \ref{theorem:convergence}. In practice, we set $\lambda$ to be close to $1/\lambda_{\max}(BB^T)$ and  $\gamma$ to be close to $2\beta$. Here we set $\lambda=1/4$ as the $n-1$ eigenvalues of  $BB^T$ can be analytically computed as $2-2 \cos(i\pi /n), i = 1, 2,\cdots, n-1$  and  $\gamma=1.99/\lambda_{\max}(A^TA)$.  For Condat's algorithm, we set $\lambda= 0.19/4$, $\gamma=1.9/\lambda_{\max}(A^TA)$, which is chosen for a relative better numerical performance. The computation time, the attained objective function values, and the relative errors to the true solution are close for Condat's algorithm and {PDFP}.  From Figure \ref{figure:FLASSO_cmp},  we see that both Condat's algorithm and {PDFP} can quite correctly recover the positions of the non-zeros and the values.

\begin{figure}[!htp]\centering\small
\caption{Recovery results for fused LASSO with Condat's algorithm and {PDFP} . }
 \label{figure:FLASSO_cmp}
 \begin{tabular}{cc}
   \includegraphics[width=0.45\textwidth]{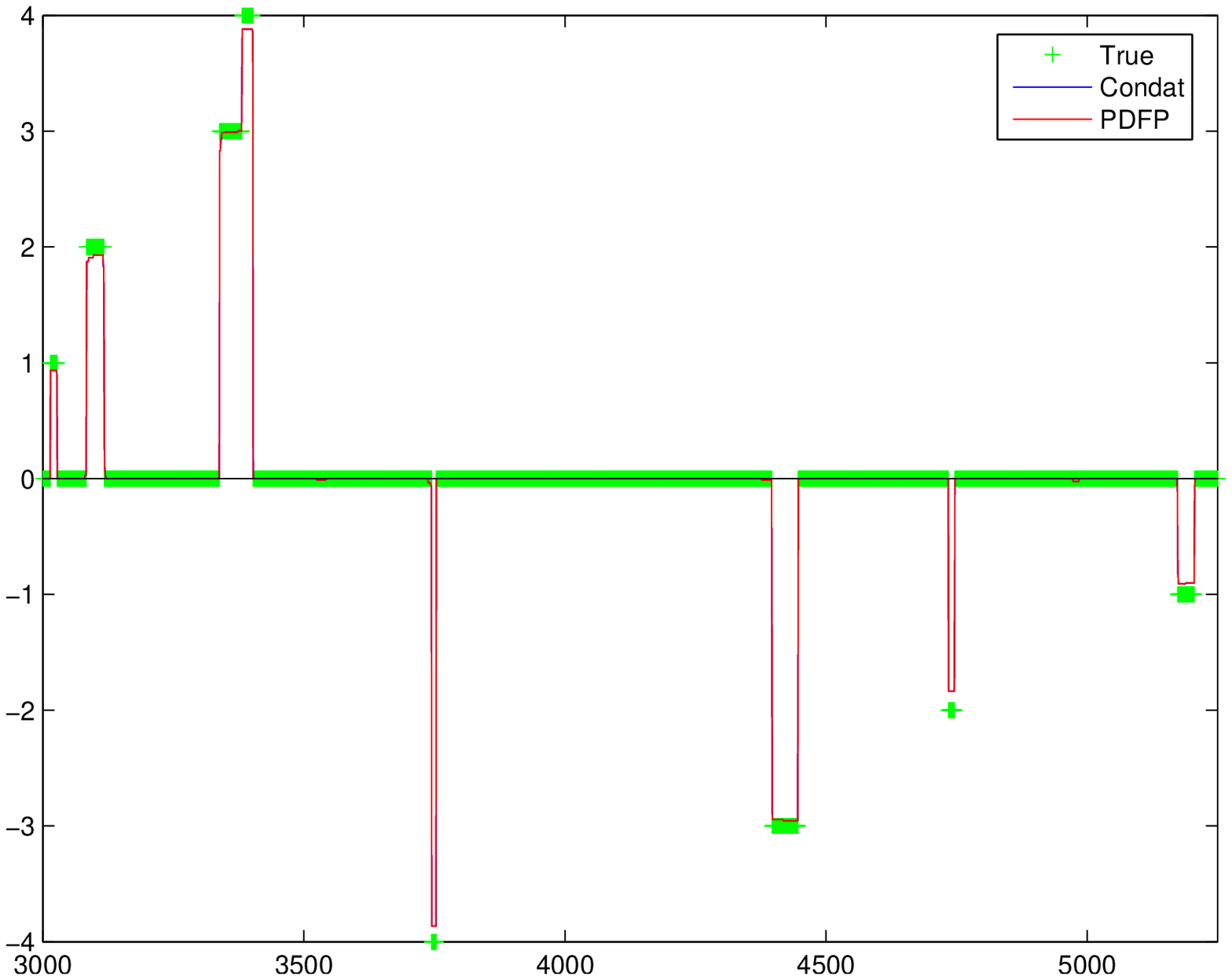}&
   \includegraphics[width=0.45\textwidth]{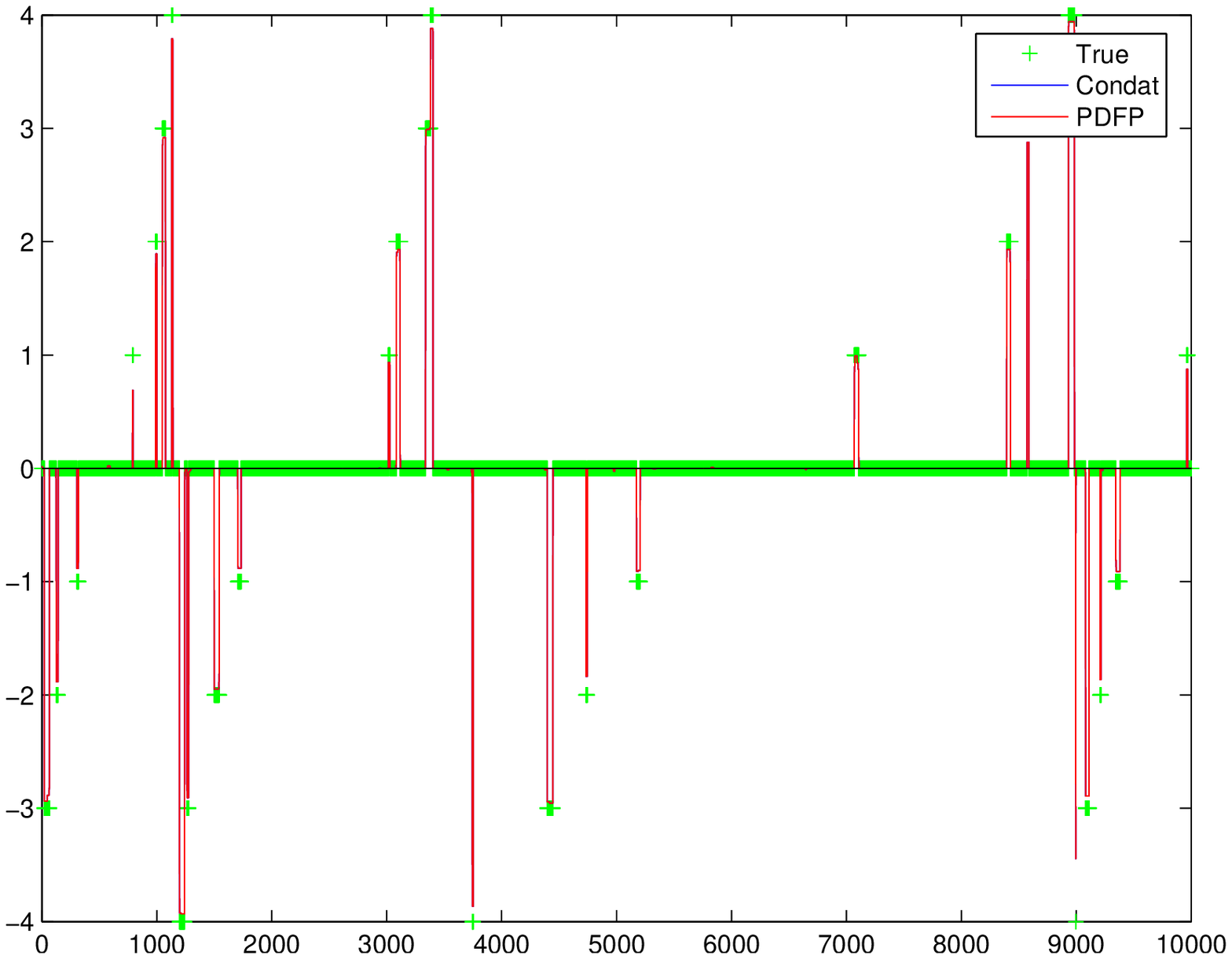}
   \end{tabular}
\end{figure}

\subsection{Image restoration with nonnegative constraint and sparse regularization}
\label{subsec:pMRI}

A general image restoration problem with nonnegative constraint and sparse regularization can be written as
\begin{align}
  \underset{x\in {C}}{\mbox{min}}\quad\frac 1 2\|Ax-a\|^2 +\mu \|Bx\|_1,  \label{eq_TVL2}
\end{align}
 where $A$ is some bounded linear operator describing the image formation process, $\|Bx\|_{1}$ is  the usual $\ell_1$ based regularization  in order to promote sparsity under the transform $B$, $\mu>0$ is the regularization parameter. Here we use isotropic  total variation as the regularization functional, thus the matrix $B$ represents for the discrete gradient operator. For this example, we can set $f_1(x)=\frac 1 2\|Ax-a\|^2$, $f_2=\mu \|\cdot\|_1$, and ${f_3}=\chi_C$.

We consider pMRI reconstruction,  where $A=(A_1^T,A_2^T,\cdots, A_N^T)^T$ for each $A_j$ is composed of a  diagonal
downsampling operator $D$, Fourier transform $F$ and a  diagonal coil sensitivity mapping $S_j$ for receiver $j$, i.e. $A_j=DFS_j$ and $S_j$ are often estimated in advance. It is well known in total variation application that $\lambda_{\max}(BB^T)=8$.  The related Lipschitz constant of $ \nabla f_1$ can be estimated as $\beta=1$. Therefore the two parameters in PDFP are set as $\lambda=1/8$ and $\gamma=2$. The same simulation setting as in  \cite{CHZ13} is used in this experiment and we still use artifact power (AP) and two-region signal to noise ratio (SNR) to measure image quality. One may refer to \cite{CHZ13,JSR07} for more details.

In the following, we compare PDFP algorithm with the previous proposed algorithms  PDFP$^2$O \eqref{formbasicPDFP2O} and PDFP$^2$O$_{C}$ \eqref{formbasicC1}. From Figure \ref{figure:Spine_pMRI_cmp} and \ref{figure:Brain_pMRI_cmp}, we can first see that the introduction of nonnegative constraint in the model \eqref{eq_TVL2} is beneficial and we can recover a better solution with higher two-region SNR and lower AP value. The nonnegative constraint leads to a faster convergence for a stable recovery. Secondly, PDFP$^2$O$_{C}$ and {PDFP}  are both efficient. For a subsampling rate $R=2$, PDFP$^2$O$_{C}$ and {PDFP} can both recover better solutions in terms of  AP values compared to PDFP$^2$O under the same iterative numbers.
For $R=4$, the solutions of PDFP$^2$O$_{C}$ and {PDFP} have better AP values than those of PDFP$^2$O, but only use half iteration numbers of PDFP$^2$O. The computation time for PDFP is slightly less than PDFP$^2$O$_{C}$. Finally, the iterative solutions of PDFP are always feasible, which could be useful in practice.

\begin{figure}[!htp]\centering\small
\caption{Recovery results from  four-channel in-vivo spine data with the subsampling ratio $R=2, 4$.  For PDFP$^2$O and {PDFP}, $\lambda=1/8$, $\gamma=2$ and for PDFP$^2$O$_{C}$, $\lambda=1/9$, $\gamma=2$. }
 \label{figure:Spine_pMRI_cmp}
 \begin{tabular}{cccccc}
   &\small PDFP$^2$O &\small PDFP$^2$O$_{C}$ &\small {PDFP} \\
   R=2&
   \includegraphics[width=0.27\textwidth]{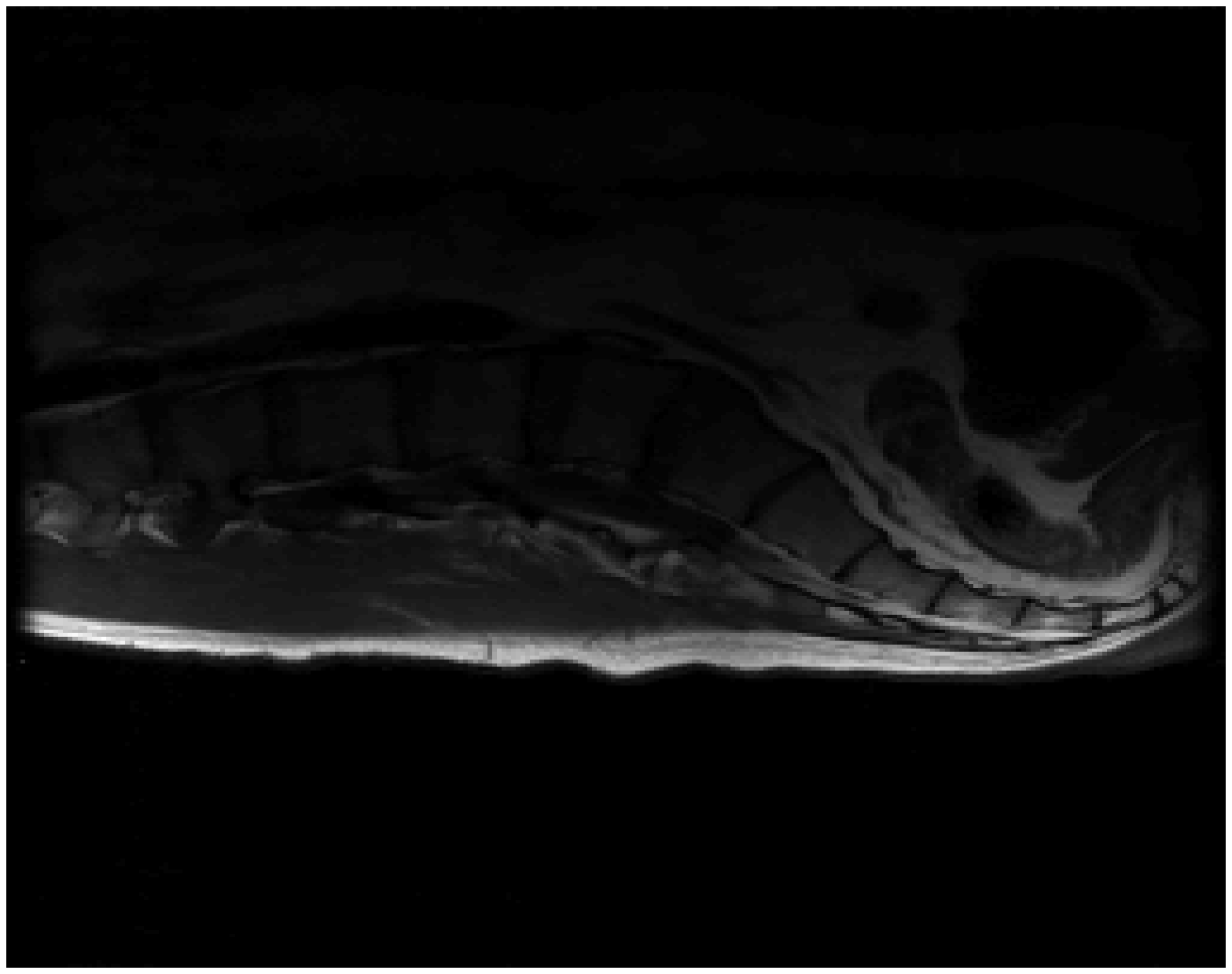}&
   \includegraphics[width=0.27\textwidth]{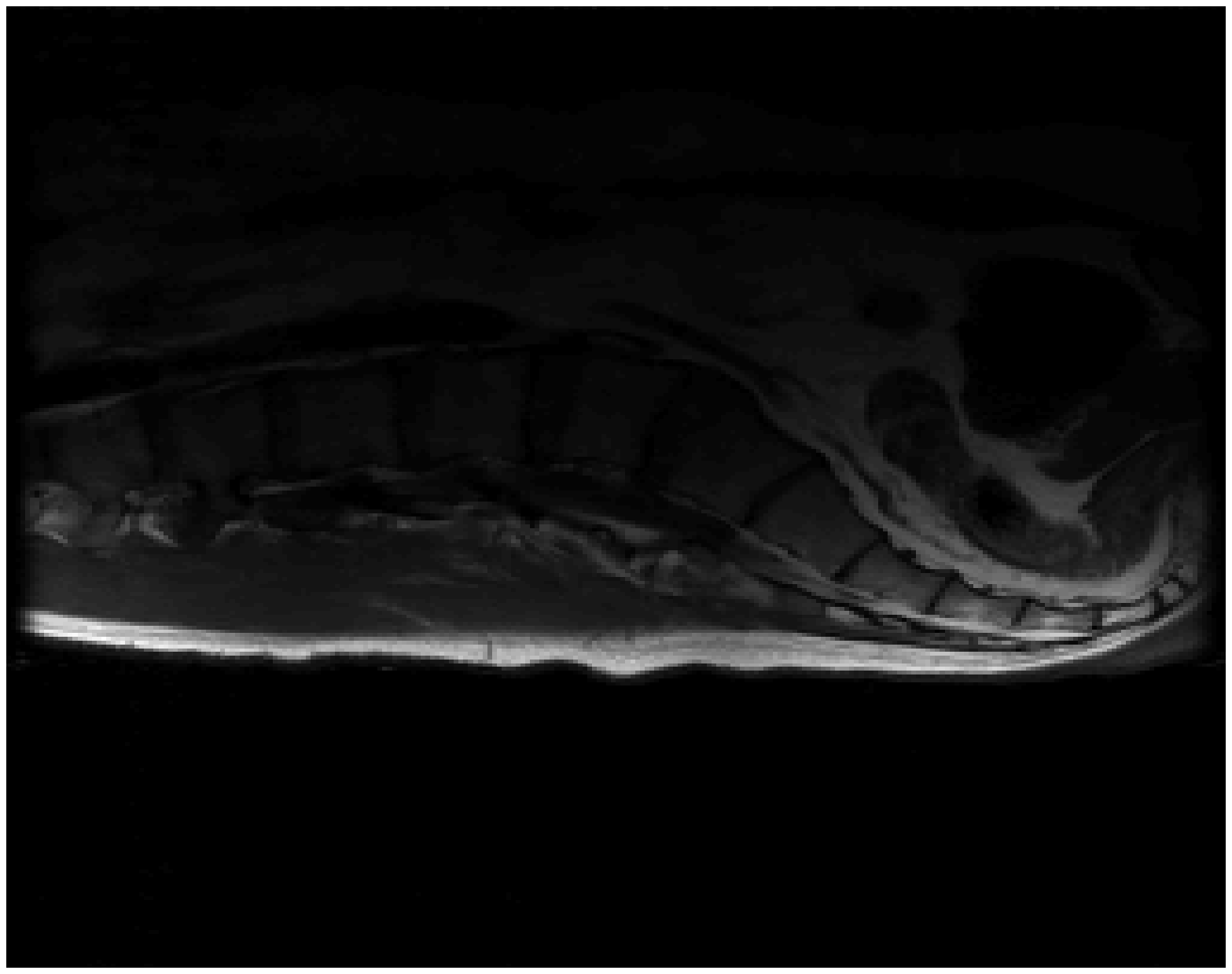}&
   \includegraphics[width=0.27\textwidth]{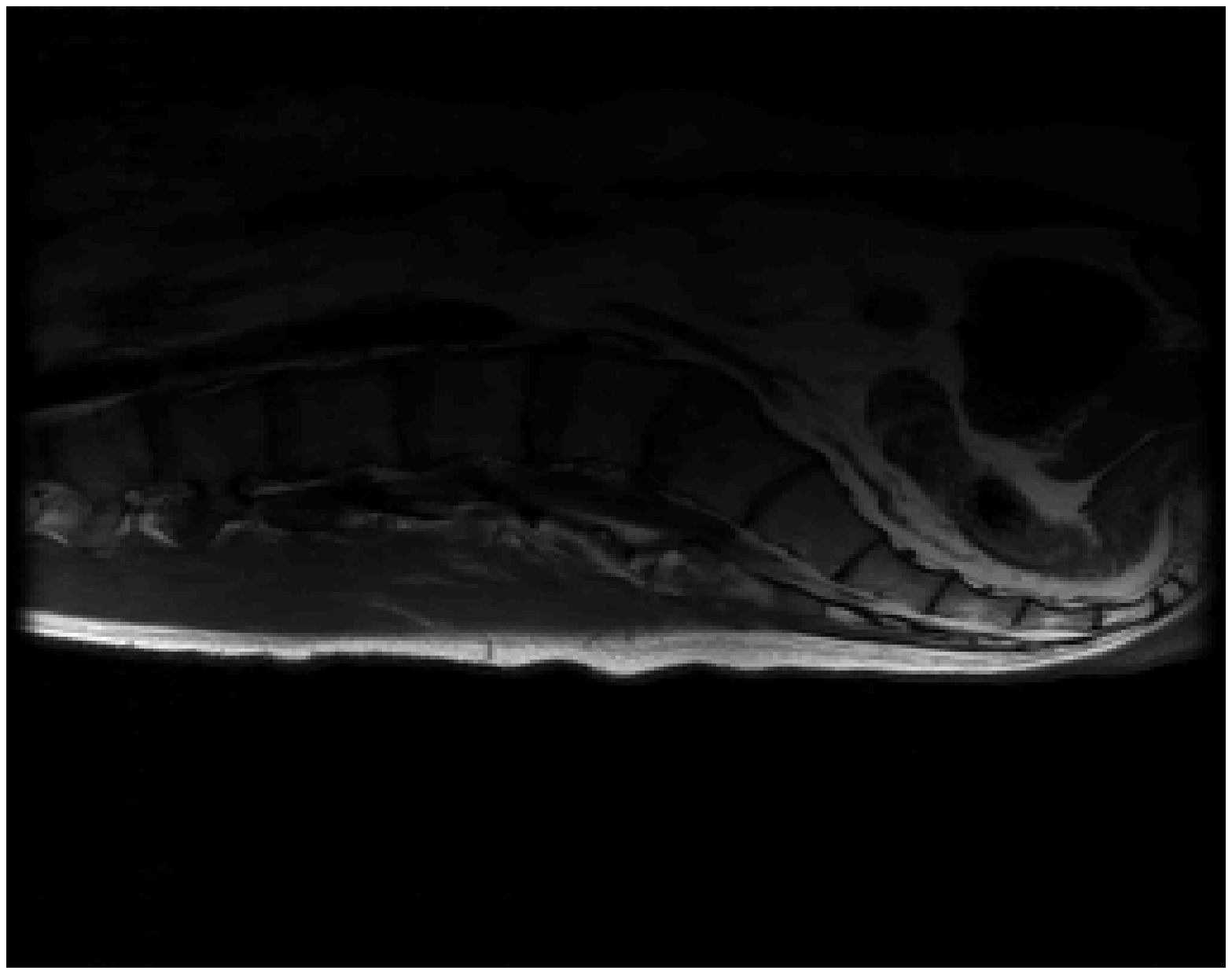}&
   \\
   AP  &  0.002523  &  0.001294  &  0.001021  \\
   SNR  & 34.94  & 35.35  & 36.01  \\
   $Itn$  & 8  & 8  & 8   \\
   time  &   0.73  &   0.75  &   0.67  \\
   %\hline
   \\
   R=4&
   \includegraphics[width=0.27\textwidth]{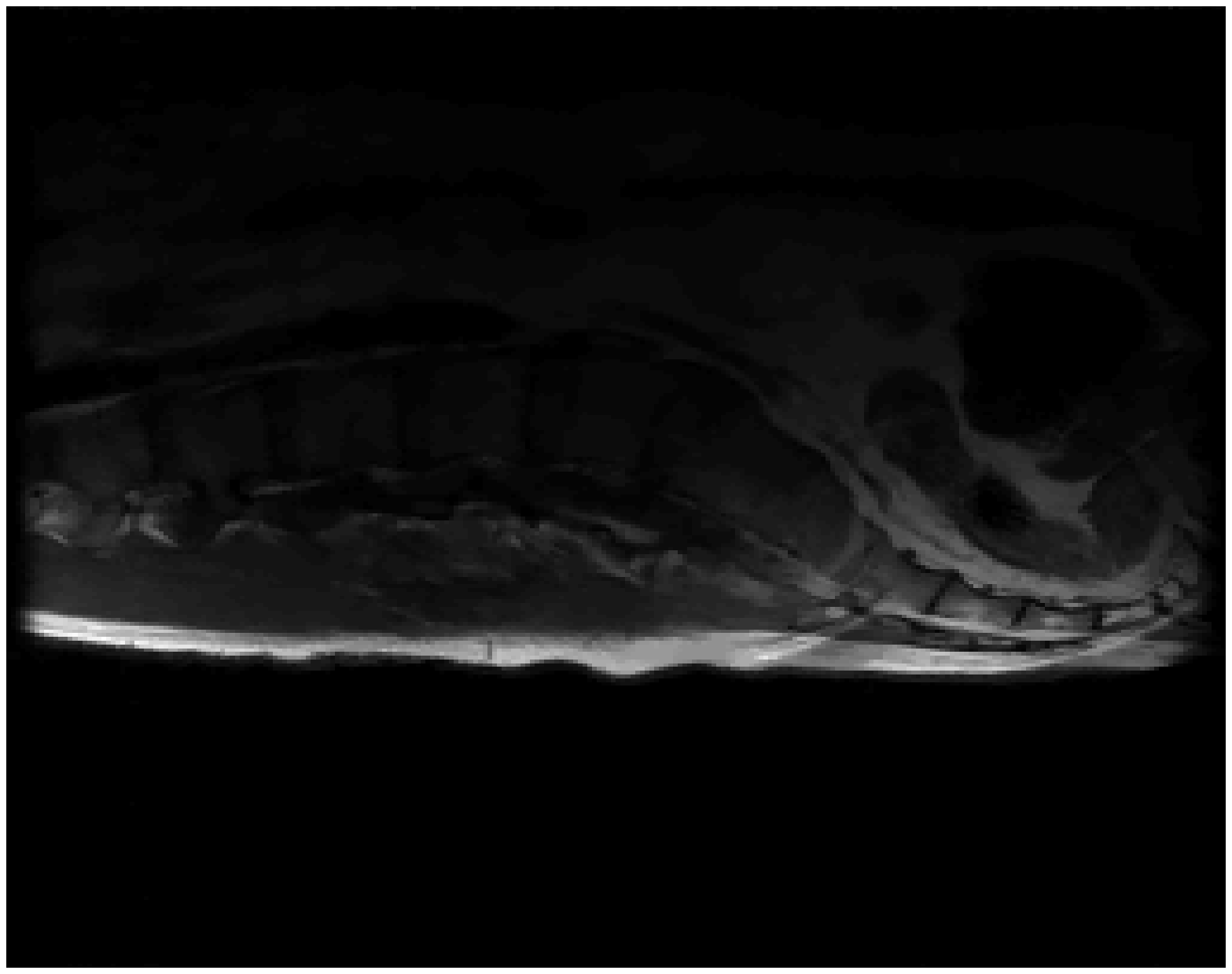}&
   \includegraphics[width=0.27\textwidth]{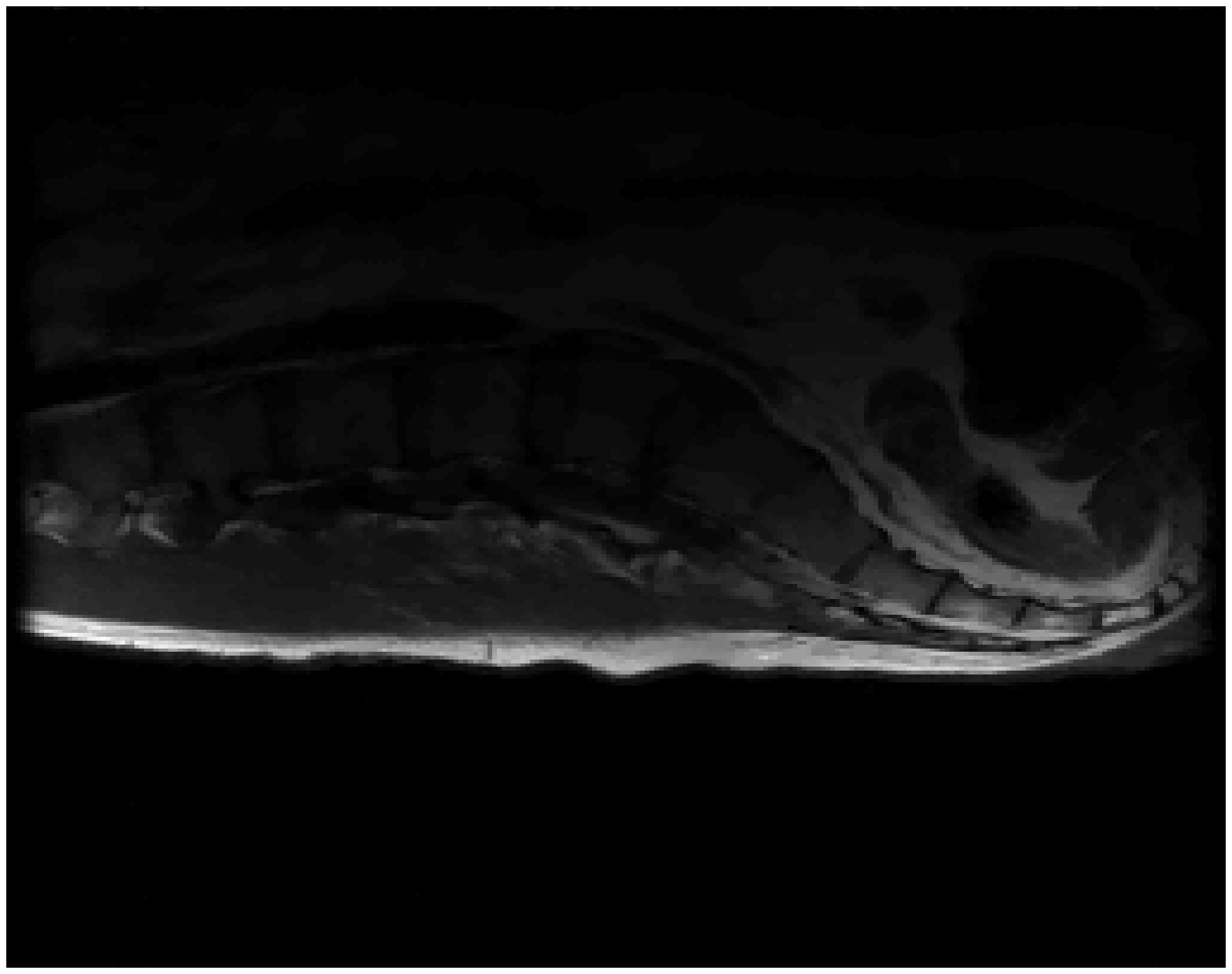}&
   \includegraphics[width=0.27\textwidth]{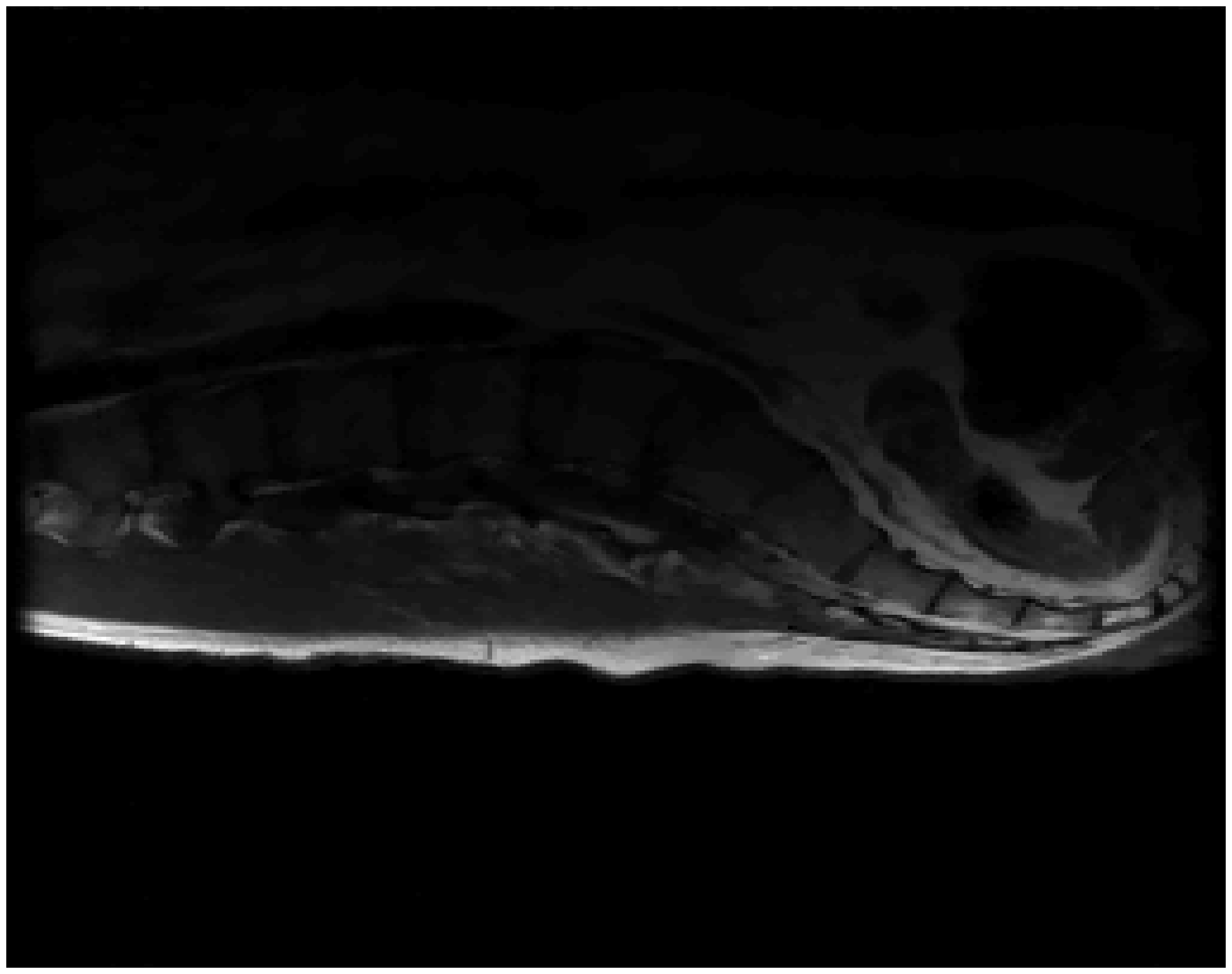}&
   \\
   AP  &  0.040011  &  0.009718  &  0.009802\\
   SNR  & 38.06  & 39.55  & 39.57\\
   $Itn$  & 500  & 250  & 250\\
   time  &  43.76  &  22.35  &  19.30  \\
   \end{tabular}
\end{figure}

\begin{figure}[!htp]\centering\small
\caption{Recovery results from eight-channel in-vivo brain data with the subsampling ratio $R=2,4$. For PDFP$^2$O and {PDFP}, $\lambda=1/8$, $\gamma=2$ and for PDFP$^2$O$_{C}$, $\lambda=1/9$, $\gamma=2$. }
 \label{figure:Brain_pMRI_cmp}
 \begin{tabular}{cccccc}
   &\small PDFP$^2$O &\small PDFP$^2$O$_{C}$ &\small {PDFP} \\
   R=2&
   \includegraphics[width=0.27\textwidth]{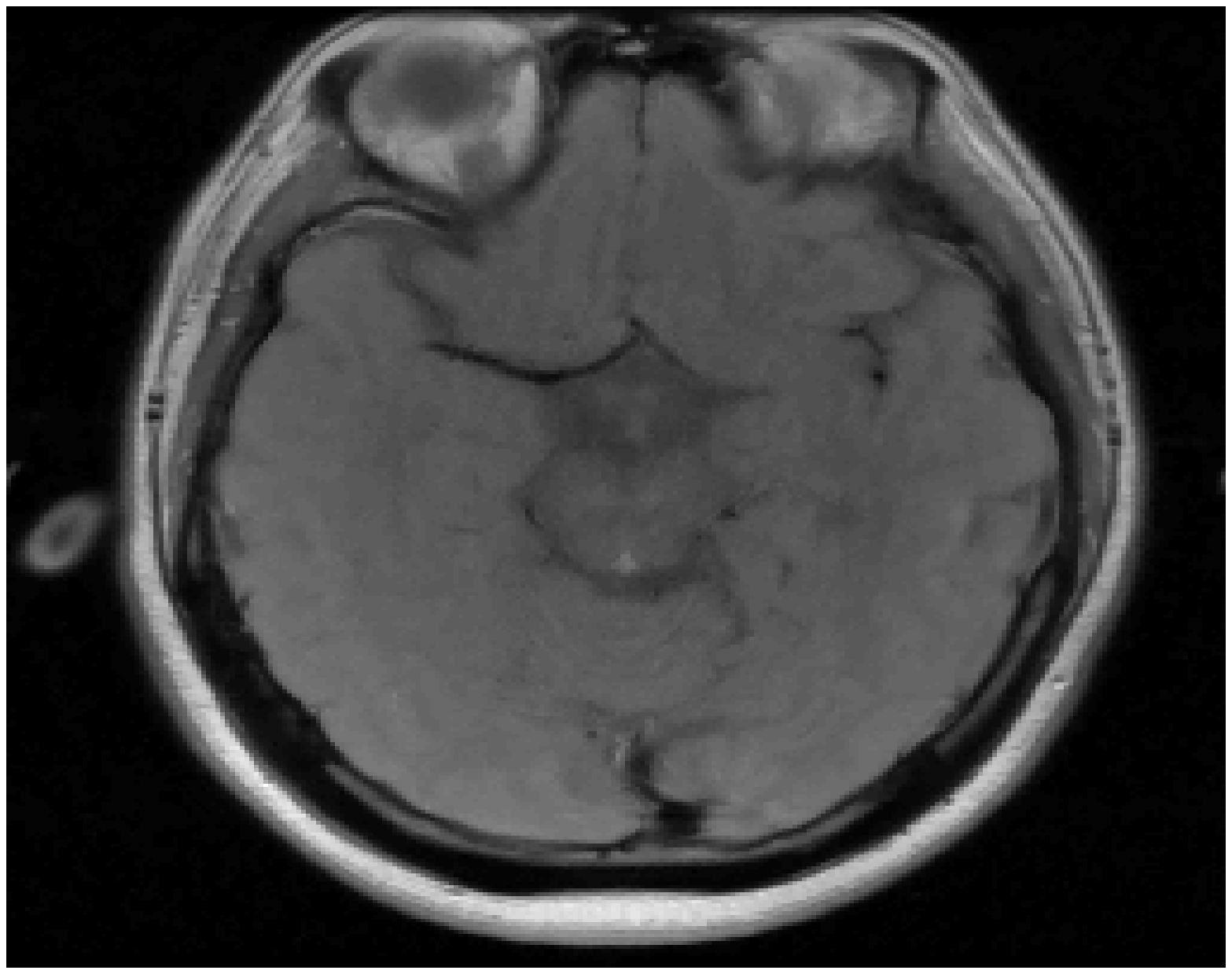}&
   \includegraphics[width=0.27\textwidth]{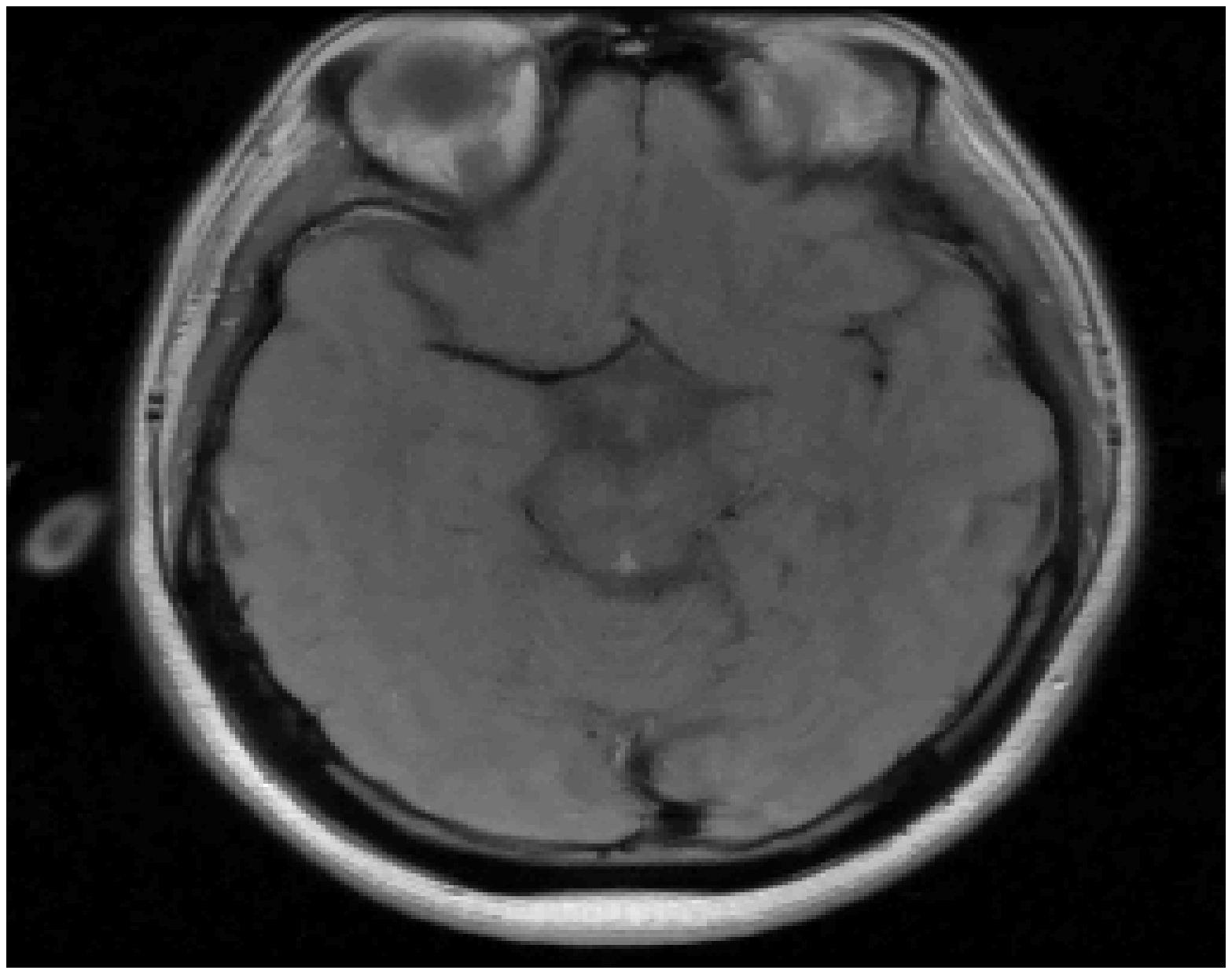}&
   \includegraphics[width=0.27\textwidth]{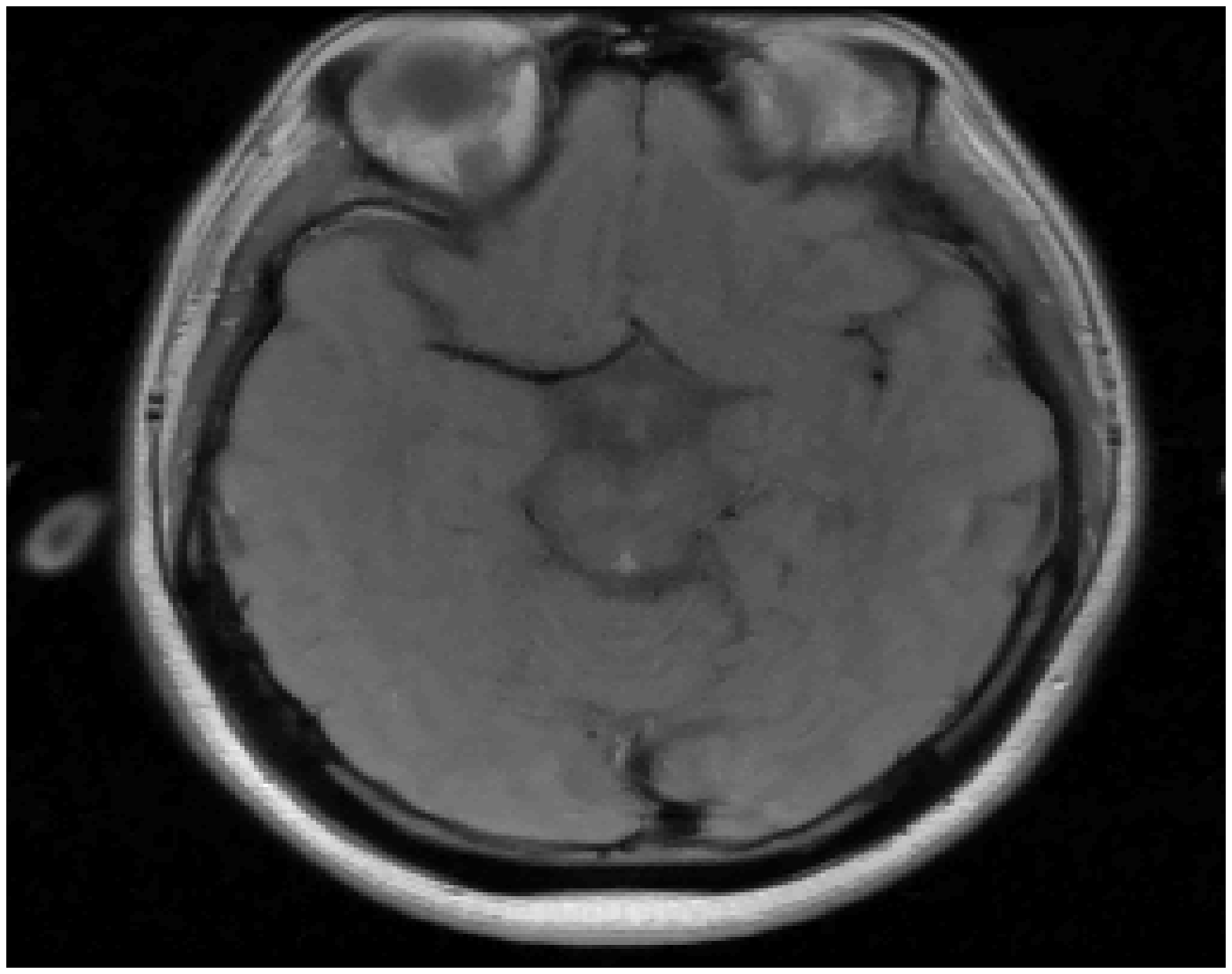}&
   \\
   AP  &  0.000822  &  0.000469  &  0.000465 \\
   SNR  & 39.36  & 39.75  & 40.38  \\
   $Itn$  & 25  & 25  & 25\\
   time  &   3.96  &   4.01  &   3.63  \\
   %\hline
   \\
   R=4&
   \includegraphics[width=0.27\textwidth]{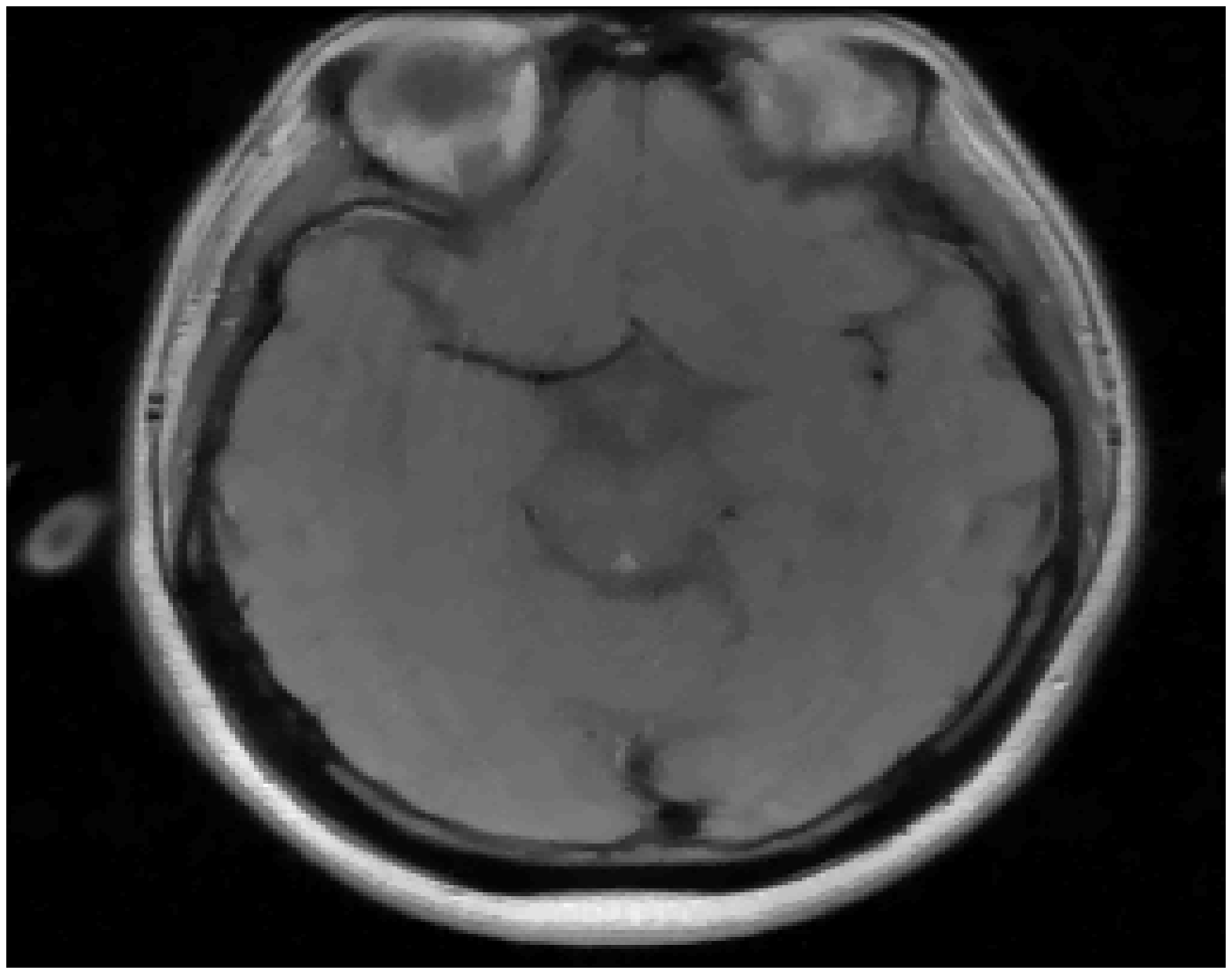}&
   \includegraphics[width=0.27\textwidth]{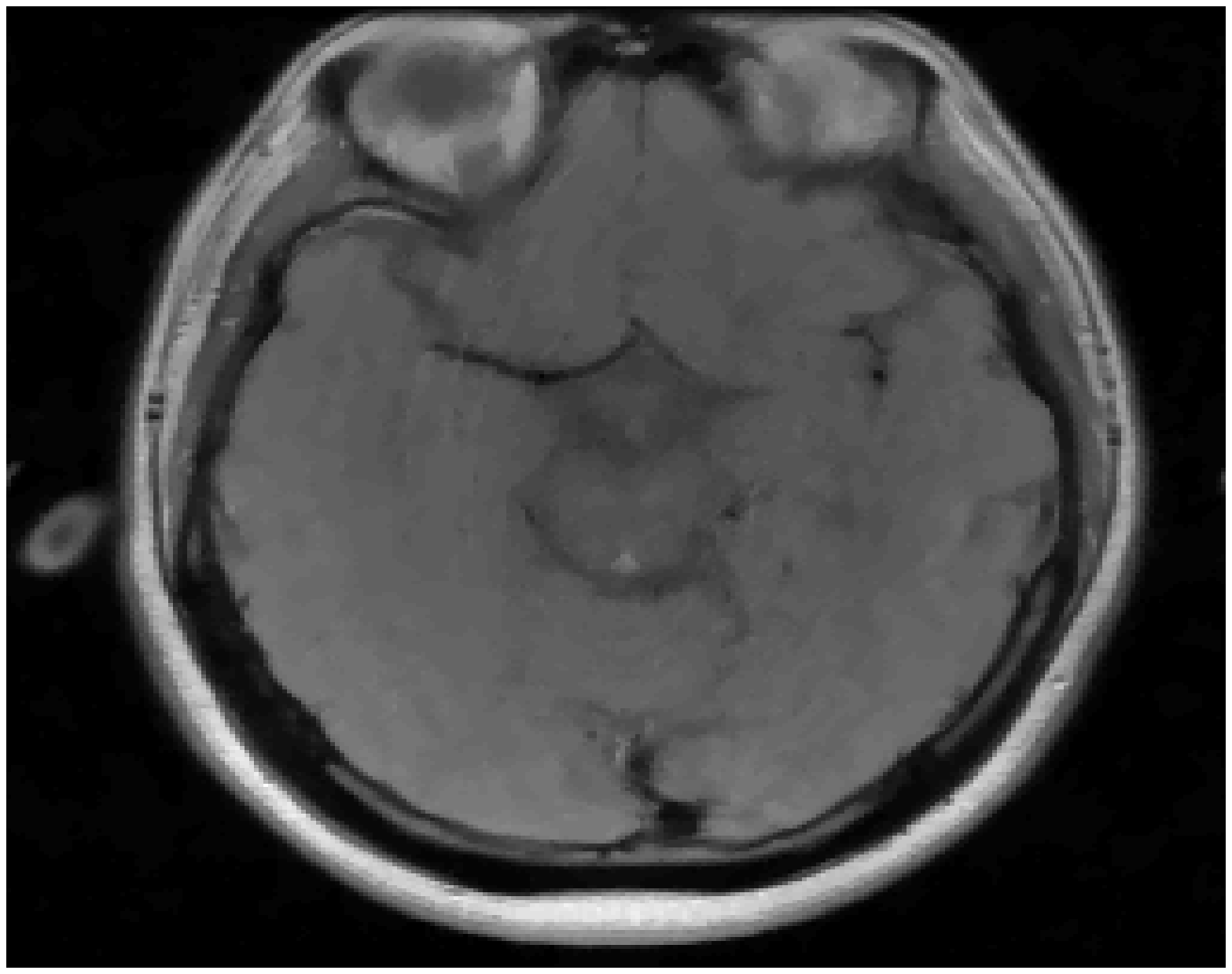}&
   \includegraphics[width=0.27\textwidth]{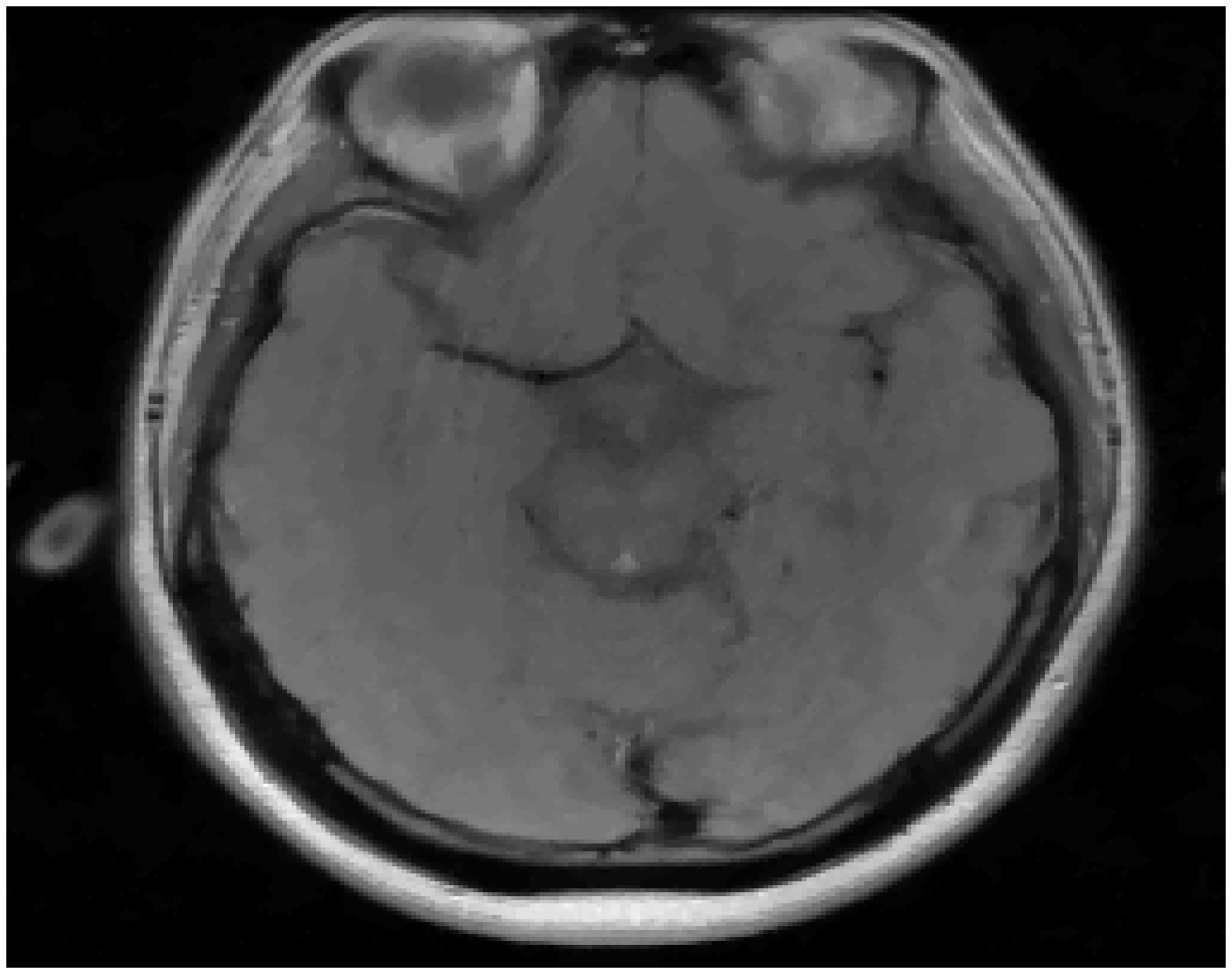}&
   \\
   AP  &  0.002502  &  0.001528  &  0.001535  \\
   SNR  & 43.06  & 43.86  & 44.13  \\
   $Itn$  & 150  & 75  & 75  \\
   time  &  23.02  &  11.74  &  10.79  \\
   \end{tabular}
\end{figure}

\section{Conclusion}
 We have extended the algorithm PAPA \cite{KLSX12} and PDFP$^2$O \cite{CHZ13} to derive a  primal-dual fixed-point algorithm {PDFP} (see \eqref{formbasic3B}) for solving the minimization problem of three-block convex separable functions \eqref{PDFP2O3B:eqbasic}. The proposed
 {PDFP} algorithm  is  a symmetric and fully splitting scheme, only involving explicit gradient and  linear operators  without any
inversion and subproblem solving, when the proximity
operator of nonsmooth functions can be  easily handled.  The scheme can be easily adapted to many inverse problems involving many terms minimization and it is suitable for large scale parallel implementation. In addition, the parameter range determined by the convergence analysis is rather simple and clear, and it could be useful for practical application. Finally as discussed in Section 5 in  \cite{C13}, we can also extend  the current {PDFP} algorithm to solve  multi-block composite  (more than three)  minimization problems. Preconditioning operators, as proposed in  \cite{TZW15,LZ15} can be also introduced to accelerate {PDFP}, which could be a  future work for some specific applications.

\section*{Acknowledgement}
% References %%%%%%%%%%%%%%%%%%%%%%%%%%%%%%%%%%%%%%%%%%%%
P. Chen was partially supported by the PhD research startup foundation of Taiyuan University of Science and Technology (No. 20132024). J. Huang was partially supported by NSFC (No. 11171219). X. Zhang was  partially supported by NSFC (No. 91330102 and GZ1025) and 973 program (No. 2015CB856000).

\end{document}